\newtheorem{thm}{Theorem}[section]
\newtheorem{prop}[thm]{Proposition}
\newtheorem{lem}[thm]{Lemma}
\newtheorem{cor}[thm]{Corollary}
\theoremstyle{definition}
\newtheorem{rem}[thm]{Remark}
\newtheorem{exmp}[thm]{Example}
\def\h{{\cal H}}
\def\fred{{\cal F}(\k)}
\def\b{{\cal B}}
\def\uu{{\cal U}}
\def\uuc{\uu^{c,J}_{L_0}}
\def\uuk{\uu^{k,J}_{L_0}}
\def\oo{{\cal O}}
\def\k{{\cal K}}
\def\u2{U_2({\cal H})}
\def\o2{Gr_{res}^0(p)}
\def\ii{{\cal I}}
\def\e{\epsilon}
\begin{document}

\title{\vspace*{0cm}Lagrangian Grassmannian in Infinite Dimension\footnote{2000 MSC. Primary 53D12. Secondary 58B20.}}


\date{}
\author{Esteban Andruchow and Gabriel Larotonda}

\maketitle

\abstract{\footnotesize{\noindent Given a complex structure $J$ on a real (finite or infinite dimensional) Hilbert space $\h$, we study the geometry of the Lagrangian Grassmannian $\Lambda(\h)$ of $\h$, i.e. the set of closed linear subspaces $L\subset \h$ such that 
$$
J(L)=L^\perp.
$$
The complex unitary group $U(\h_J)$, consisting of the elements of the orthogonal group of $\h$ which are complex linear for the given complex structure, acts transitively on $\Lambda(\h)$ and induces a natural linear connection in $\Lambda(\h)$. It is shown that any pair of Lagrangian subspaces can be joined by a geodesic of this connection. A Finsler metric can also be introduced, if one regards subspaces $L$ as projections $p_L$ (=the orthogonal projection onto $L$) or symmetries $\e_L=2p_L-I$, namely measuring tangent vectors with the operator norm. We show that for this metric the Hopf-Rinow theorem is valid in $\Lambda(\h)$: a geodesic joining a pair of Lagrangian subspaces can be chosen to be of minimal length. We extend these results to the classical Banach-Lie groups of Schatten.
}\footnote{{\bf Keywords and
phrases:} Complex structure, Lagrangian subspace, Short geodesic}}

\setlength{\parindent}{0cm} 

\section{Introduction}
Let $\h$ be an infinite dimensional real Hilbert space with a complex structure, that is, an isometric operator $J:\h\to \h$  with $J^*=-J$ and $J^2=-I$. The (non degenerate) symplectic form is given by $w(\xi,\eta)=<J\xi,\eta>$. As usual, one defines a complex Hilbert space, denoted $\h_J$, endowing $\h$ with the complex inner product $<\xi,\eta>_J=<\xi,\eta>-i w(\xi,\eta)$. The complex structure $J$ enables one to multiply vectors in $\h$ by complex numbers in the usual way: if $z=x+iy\in{\mathbb C}$, and $\xi\in H$, then $z\xi=x\xi+yJ(\eta)$.

The purpose of this paper is the geometric study of the {\it Lagrangian Grassmannian} $\Lambda(\h)$,
the space of Lagrangian subpaces of $\h$, i.e. closed subspaces $L\subset \h$ such that
$$
J(L)=L^\perp.
$$

The Lagrangian Grassmannian $\Lambda(n)$ of $\h=\mathbb{R}^n\times \mathbb{R}^n$ ($J(x,y)=(-y,x)$) was introduced by V.I. Arnold in \cite{arnold} in 1967. He showed the connection between the topology of $\Lambda(n)\simeq O(n)/U(n)$ and the index introduced by Maslov for closed curves on a Lagrangian manifold $M\subset \mathbb{R}^{2n}$. These notions have been generalized to infinite dimensional Hilbert spaces (see \cite{furutani} and references therein), and have found several applications to Algebraic Topology, Differential Geometry and Physics. 

In this paper we consider a natural linear connection in $\Lambda(\h)$, and focus on the geodesic structure of this manifold. The Hopf-Rinow theorem states that any two points on a complete, finite dimensional Riemannian manifold can be joined by a minimal geodesic. It is well known \cite{grossman} that it is no longer true in infinite dimensions. Two points may not be even joined by a geodesic \cite{atkin}. The manifold $\Lambda(\h)$ is not even Riemannian, the natural metric available for the tangent spaces, as it will be clear, is the usual norm of operators in the Hilbert space $\h$. Our main result here shows that any two Lagrangian subspaces $L_0,L_1\subset \h$ can be joined by a minimal geodesic. Denote by $p_L$ the orthogonal projection onto $L$. In general two projections $p_{L_0}$, $p_{L_1}$ verify $\|p_{L_0}-p_{L_1}\|\le 1$. We show that if $\|p_{L_0}-p_{L_1}\|=1$ there can be two or infinite many minimal geodesics of $\Lambda(\h)$ joining $L_0$ and $L_1$. If $\|p_{L_0}-p_{L_1}\|<1$, the minimal geodesic is unique.

The {\it real Grassmannian}, the space of closed subspaces of $\h$,  will be denoted $Gr(\h)$. 
The space of closed complex subspaces, i.e. subspaces $S\subset \h$ such that $z\xi\in S$ whenever $z\in {\mathbb C}$ and $\xi\in S$, or equivalently
$$
J(L)=L,
$$
will be denoted the {\it complex Grassmannian } $Gr(\h_J)$.

Clearly $Gr(\h_J)\subset Gr(\h)$ and $\Lambda_J(\h)\subset Gr(\h)$. It is known (see \cite{cpr,furutani}) that the three sets are differentiable manifolds, and that the inclusions are submanifofds. Also it is clear that $Gr(\h_J)\cap\Lambda_J(\h)=\emptyset$. 

If $S\in Gr(\h)$,  then clearly $S\in Gr(\h_J)$ if and only if $p_SJ=Jp_S$. Also it is clear in this case that $p_S$ is the $<\ ,\ >_J$-orthogonal projection onto $S$. More generally, if $\b(\h)$ denotes the space of (real) linear operators in $\h$, the space $\b(\h_J)$ of complex linear operators consists of all elements $a\in\b(\h)$ such that $aJ=Ja$. Moreover, $L\in Gr(\h)$ is a Lagrangian subspace if and only if $p_LJ+Jp_L=J$.

It is customary to parametrize closed subspaces via orthogonal projections, $S\leftrightarrow p_S$, in order to carry on geometric or analytic computations. We shall also consider here, as in \cite{pr}, an alternative description using symmetries. Denote by $\epsilon_S=2p_S-I$, i.e. the symmetric orthogonal transformation which acts as the identity in $S$ and minus the identity in $S^\perp$. Therefore we identify
$$
S\leftrightarrow p_S \leftrightarrow \epsilon_S,
$$
and shall favor which suits best in each situation.
With these identifications, one has that
$$
Gr(\h_J)=\{\epsilon=\epsilon_S: \epsilon^*=\epsilon=\epsilon^{-1} \hbox{ and } \epsilon J=J\epsilon\},
$$
and 
$$
\Lambda_J(\h)=\{\epsilon=\epsilon_L: \epsilon^*=\epsilon=\epsilon^{-1} \hbox{ and } \epsilon J=-J\epsilon\}.
$$
That is complex subspaces commute with $J$ whereas Lagrangian subspaces anti-commute with $J$.

Denote by $O(\h)$ the orthogonal group of $\h$, i.e. $$
O(\h)=\{g\in\b(\h): g^*=g^{-1}\}.
$$
Here $g^*$ denotes the transpose of $g$, we shall use the same notation for the adjoint for complex linear operators, and no confusion should arise. Clearly $O(\h)$ acts on $Gr(\h)$ by means of $g\cdot S=g(S)$, or equivalently, $g\cdot \epsilon_S=g\epsilon_S g^*$. Denote by $U(\h_J)\subset O(\h)$ the subgroup 
$$
U(\h_J)=\{u\in O(\h): uJ=Ju\},
$$
i.e. the unitary group of the complex Hilbert space $\h_J$. This group $U(\h_J)$ acts both on $Gr(\h_J)$ and $\Lambda_J(\h)$ in the same fashion. 
The actions of $O(\h)$ on $Gr(\h)$ and of $U(\h_J)$ on $Gr(\h_J)$ are locally transitive: if $\epsilon_1$ and $\epsilon_2$ are symmetries in the same Grassmannian, such that $\|\epsilon_1-\epsilon_2\|<2$, then they are conjugate by an element of the group. Therefore the orbits of the corresponding actions coincide with the connected components. For the case of the real and complex Grassmannians, the components are parametrized by the dimensions of the subspaces and their complements. For the Lagrangian Grassmannian, the action of $U(\h_J)$ is transitive, and in particular $\Lambda_J(\h)$ is connected. 

In this paper we introduce a linear connection (Section 2) and a Finsler metric (Section 3) in $\Lambda(\h)$. The linear connection is the one defined in \cite{pr} and \cite{cpr} for the whole Grassmannian $Gr(\h)$, which restricts to $\Lambda(\h)$: if $Y$ is a  field  and $X$ is a vector both tangent to $\Lambda(\h)$, then the derivative $\nabla_XY$ performed in $Gr(\h)$ remains tangent to $\Lambda(\h)$. The geodesics of $\Lambda(\h)$ can therefore be computed. We show that the exponential map of $\Lambda(\h)$ is onto (a property which in general $Gr(\h)$ does no have), and thus any pair of Lagrangian subspaces can be joined by a geodesic curve. Moreover, we show that the geodesic can be chosen of minimal length for the Finsler metric given by the usual operator norm at each tangent space of $\Lambda(\h)$. In other words, the Hopf-Rinow theorem is valid in $\Lambda(\h)$ for this metric. We also consider the geometry of certain subsets of  $\Lambda(\h)$. In Section 4 we consider the graphs of unbounded self-adjoint Fredholm operators, which form an open subset of $\Lambda(\h)$. In Section 5 we study the submanifolds obtained as orbits of the Fredholm unitary group $U_c(\h_J)$,
$$
U_c(\h_J)=\{u\in U(\h_J): u-I \hbox{ is compact}\}.
$$
These are shown to be geodesically convex: if two Lagrangian subspaces lie in the same orbit, then the minimal geodesics which join them in $\Lambda(\h)$ remain inside the orbit. With the same technique, we treat orbits of a subspace under the action of the $k$-Schatten unitary groups $U_k(\h_J)$,
$$
U_k(\h_J)=\{u\in U(\h_J): u-I\in B_k(\h_J)\},
$$
where $B_k(\h_J)$ denotes the $k$-Schatten ideal of $\h_J$, for $k\ge 2$. Results analogous to the compact case are obtained, the Finsler metric considered here is the one induced by the $k$-norm at every tangent space. 

\section{Linear connection}

In \cite{pr} Porta and Recht introduced a linear connection in the Grassmannian  of a (real or complex) C$^*$-algebra. This connection was obtained from a natural reductive structure on this homogeneous space. Given the action of the orthogonal (or unitary group), the orbit of a given element under the action can be regarded as a quotient (or homogeneous space) of this group, by the isotropy group, or subgroup of transformations which leave the element fixed. Therefore the tangent space of the Grassmannian at this element is isomorphic to the quotient of the corresponding Banach-Lie algebras. A reductive structure is a smooth choice of invariant suplements of the Lie algebra of the isotropy groups inside the Lie algebra of the orthogonal (or unitary) group. The tangent spaces of the Grassmannian are naturally isomorphic to these supplements, and the main data of the reductive structure are precisely these isomorphisms, usually called the $1$-form of the reductive structure. The supplement must fulfill two requirements: first it has to be invariant under the adjoint representation of the isotropy subgroup, second, supplements have to vary smoothly as the element varies. In the  case discused here, the real Grassmannian $Gr(\h)$, given an element $\epsilon_0\in Gr(\h)$, the isotropy subgroup is
$$
G_{\epsilon_0}=\{g\in O(\h): g\epsilon_0=\epsilon_0 g\}.
$$
The Lie algebra of $O(\h)$ is $\b_{as}(\h)=\{a\in\b(\h): a^*=-a\}$, the space of anti-symmetric operators in $\h$. Therefore, the Lie algebra of the isotropy group is 
$$
\ii_{\epsilon_0}=\{b\in\b_{as}(\h): b\epsilon_0=\epsilon_0 b\}.
$$
If $S_0$ is the subspace corresponding to the symmetry $\epsilon_0$, then elements in $\ii_{\epsilon_0}$ can be writen as block matrices of the form
$$
\left(
\begin{array}{cc}
b_{11} & 0 \\
0 & b_{22} 
\end{array}
\right) \begin{array}{ll} S_0 \\ S_0^{\perp} \end{array},
$$
where $b_{11}$ and $b_{22}$ are anti-symmetric operators in $S_0$ and $S_0^\perp$, respectively.
The natural choice for a supplement for $\ii_{\epsilon_0}$ done in \cite{pr}, was to take all anti-symmetric  operators of the form
$$
\left(
\begin{array}{cc}
0 & c_{12} \\
-c_{12}^* & 0 
\end{array}
\right) \begin{array}{ll} S_0 \\ S_0^{\perp} \end{array}.
$$
If one denotes by $H_{\epsilon_0}$ this space, then
$$
H_{\epsilon_0}=\{c\in\b_{as}(\h): c\epsilon_0=-\epsilon_0 c\}.
$$
A reductive structure induces a linear connection in $Gr(\h)$ in a standard fashion. The main invariants of this connection can be described: geodesics, exponential map, curvature and torsion.
First let us characterize the tangent spaces of $Gr(\h)$. Since $Gr(\h)$ is a submanifold of both $\b_s(\h)$, the space of symmetric operators of $\h$ and of $O(\h)$ (symmetries are orthogonal transformations), then $(T Gr(\h))_{\epsilon_0}\subset \b_s(\h) \cap [\epsilon_0 \b_{as}(\h)]$. In fact, as it is shown in \cite{pr}, equality holds. Thus an element  $x\in(T Gr(\h))_{\epsilon_0}$ verifies $x^*=x$ and $(\epsilon_0 x)^*=-\epsilon_0 x$. That is
$$
(T Gr(\h))_{\epsilon_0}=\{x\in\b_s(\h): x\epsilon_0=-\epsilon_0 x\}.
$$
In other words, in terms of the decomposition $\h=S_0\oplus S_0^\perp$, tangent vectors correspond to symmetric co-diagonal matrices. 

Symmetries in $Gr(\h_J)$ commute with $J$. Therefore if $\epsilon_0\in Gr(\h_J)$, then
\begin{equation}\label{tangentecompleja}
(T Gr(\h_J))_{\epsilon_0}=\{x\in \b_s(\h): x\epsilon_0=-\epsilon_0 x \hbox{ and } xJ=Jx\}.
\end{equation}
Analogously, if $\epsilon_0\in\Lambda_J(\h)$,
\begin{equation}\label{tangentelagrangiana}
(T \Lambda_J(\h))_{\epsilon_0}=\{x\in \b_s(\h): x\epsilon_0=-\epsilon_0 x \hbox{ and } xJ=-Jx\}.
\end{equation}

If $\epsilon=2p-I\in Gr(\h)$, let us denote by $\Pi_\epsilon:\b_s(\h)\to \b_s(\h)$ the natural projection onto co-diagonal matrices with respect to $S=R(p)$:
\begin{equation}\label{proyeccion1}
\Pi_\epsilon(a)=(I-p)a p+p a (I-p).
\end{equation}
Using that $p=\frac12(\epsilon+I)$, after elementary computations, one obtains the alternative formula
\begin{equation}\label{proyeccion2}
\Pi_\epsilon(a)=\frac12(a-\epsilon a \epsilon).
\end{equation}

Let us write the formula for the covariant derivative. Suppose that $\gamma(t)$ is a smooth curve in $Gr(\h)$, and $X(t)$ is a tangent field along $\gamma(t)$, then
\begin{equation}\label{derivada covariante}
\frac{D}{d t} X=\Pi_{\epsilon(t)}(\dot{X}(t)).
\end{equation}
The $1$-form of the reductive structure has also a simple formula. For instance the isomorphism between $(T Gr(\h))_{\epsilon_0}$ and $H_{\epsilon_0}$ is given by
$$
x=\left( \begin{array}{cc} 0 & x_{12} \\ x_{12}^* & 0 \end{array} \right)  \mapsto \tilde{x}=\left( \begin{array}{cc} 0 & x_{12} \\ -x_{12}^* & 0 \end{array} \right).
$$
If $x_0\in (T Gr(\h))_{\epsilon_0}$, the unique geodesic $\delta$ of the connection with $\delta(0)=\epsilon_0$ and $\dot{\delta}(0)=x_0$ is given by
\begin{equation}\label{geodesica1}
\delta(t)= e^{t \tilde{x}_0} \epsilon_0 e^{-t\tilde{x}_0}.
\end{equation}
Note the remarkable fact that since $\tilde{x}_0$ anti-commutes with $\epsilon_0$ one has that $e^{t\tilde{x}_0}\epsilon_0=\epsilon_0 e^{-t\tilde{x}_0}$, and therefore
\begin{equation}\label{geodesica2}
\delta(t)=e^{2t\tilde{x}_0}\epsilon_0=\epsilon_0 e^{-2t\tilde{x}_0}.
\end{equation}
The main result concerning the existence of geodesics joining given points in $Gr(\h)$ is the following (see \cite{brown,cpr,pr}):
\begin{thm}
If $\epsilon_0, \epsilon_1\in Gr(\h)$ verify $\|\epsilon_0-\epsilon_1\|<2$, then there exists a unique geodesic $\delta(t)=e^{2tz}\epsilon_0$ of this connection with $\delta(0)=\epsilon_0$ and $\delta(1)=\epsilon_1$. Moreover, $\|z\|<\pi/2$.
\end{thm}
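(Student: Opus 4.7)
The plan is to reduce the theorem to constructing an anti-symmetric operator $z\in\b(\h)$ satisfying three conditions: (i) $z\epsilon_0=-\epsilon_0 z$, (ii) $\|z\|<\pi/2$, and (iii) $e^{2z}=\epsilon_1\epsilon_0$. Once such a $z$ is produced, formula (\ref{geodesica2}) shows that $\delta(t):=e^{2tz}\epsilon_0$ is a geodesic of the connection with $\delta(0)=\epsilon_0$ and
$$
\delta(1)=e^{2z}\epsilon_0=(\epsilon_1\epsilon_0)\epsilon_0=\epsilon_1.
$$
Since $\dot\delta(0)=2z\epsilon_0$ recovers $z$, uniqueness of $z$ will translate into uniqueness of $\delta$.

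The existence is an exercise in holomorphic functional calculus applied to $u:=\epsilon_1\epsilon_0$. Being a product of symmetries, $u$ is orthogonal, hence normal, and the identity
$$
u-I=\epsilon_1\epsilon_0-\epsilon_1^2=\epsilon_1(\epsilon_0-\epsilon_1)
$$
gives $\|u-I\|\le\|\epsilon_0-\epsilon_1\|<2$, so $\sigma(u)$ is a compact subset of $\mathbb{T}\setminus\{-1\}$. The principal branch of $\log$ is therefore holomorphic on a neighborhood of $\sigma(u)$, and I would set $2z:=\log u$ via the functional calculus. Spectral mapping places $\sigma(2z)$ inside $i(-\pi,\pi)$, and normality of $2z$ gives $\|2z\|=r(2z)<\pi$, yielding (ii). The required algebraic identities follow from two standard compatibility properties of functional calculus on normal operators: adjoints commute with the calculus, so $(\log u)^{*}=\log(u^{*})=\log(u^{-1})=-\log u$ (the last step uses $\log(w^{-1})=-\log w$ on the domain of the principal branch), giving $z^{*}=-z$; and conjugation by an invertible commutes with the calculus, so from $\epsilon_0 u\epsilon_0=\epsilon_0\epsilon_1=u^{*}$ one obtains $\epsilon_0(\log u)\epsilon_0=\log(u^{*})=-\log u$, i.e.\ $\epsilon_0 z\epsilon_0=-z$, which is (i).

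For uniqueness, any candidate $z_1$ with the three properties gives $e^{2z_1}=u$ with $2z_1$ normal and $\sigma(2z_1)\subset i(-\pi,\pi)$; since $\exp$ is injective on such a spectral strip (the principal $\log$ is its two-sided inverse there), $2z_1=2z$. The main subtlety I expect is that $\h$ is real while holomorphic functional calculus is a complex tool. I would handle this by passing to the complexification $\h_{\mathbb{C}}$, carrying out the calculus on the normal complex-linear extension $u_{\mathbb{C}}$, and then noting that the principal branch of $\log$, being invariant under complex conjugation on $\mathbb{C}\setminus(-\infty,0]$, sends the ``real'' operator $u_{\mathbb{C}}$ back to a real operator, which descends to the desired $2z\in\b(\h)$.
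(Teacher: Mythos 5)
Your proof is correct. The paper does not prove this theorem itself (it is quoted from Brown, Corach--Porta--Recht and Porta--Recht), and your construction of $2z$ as the principal logarithm of the orthogonal operator $\epsilon_1\epsilon_0$ --- whose spectrum stays away from $-1$ because $\|\epsilon_1\epsilon_0-I\|=\|\epsilon_0-\epsilon_1\|<2$ --- is precisely the standard argument and the same mechanism the paper itself invokes right afterwards, when in the proof of geodesic convexity of $\Lambda_J(\h)$ it recovers $2z$ as a power series in $e^{2z}=\epsilon_1\epsilon_0$.
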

Note that in general two elements in $Gr(\h)$ lie at most at norm distance $2$. 

We shall use this connection in $Gr(\h)$ to induce a connection in the submanifold $\Lambda_J(\h)$. The  connection projects well in these submanifolds (the submanifolds $Gr(\h_J)$ and $\Lambda_J(\h)$ are flat). We only prove this fact for the Lagrangian Grassmannian. 
\begin{lem}
If $\epsilon$ lies in $\Lambda_J(\h)$ and $x$ anti-commutes with $J$, then $\Pi_\epsilon(x)$ anti-commutes with $J$
\end{lem}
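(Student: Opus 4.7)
The plan is to prove this by a direct algebraic computation using the closed-form expression (\ref{proyeccion2}) for $\Pi_\epsilon$, namely $\Pi_\epsilon(x)=\tfrac12(x-\epsilon x\epsilon)$. Since $J$-anti-commutation is a linear condition, it suffices to show separately that $x$ and $\epsilon x\epsilon$ each anti-commute with $J$; the first holds by hypothesis, so the only real content is to verify this for $\epsilon x\epsilon$.

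To do so I would push $J$ through the word $\epsilon x\epsilon$ from right to left using the two available relations $\epsilon J=-J\epsilon$ and $xJ=-Jx$. Each swap contributes a sign, and there are three swaps in total, giving $\epsilon x\epsilon J=-J\epsilon x\epsilon$. Combined with $xJ=-Jx$, this yields
\[
\Pi_\epsilon(x)\,J=\tfrac12(xJ-\epsilon x\epsilon J)=\tfrac12(-Jx+J\epsilon x\epsilon)=-J\,\Pi_\epsilon(x),
\]
which is the claim.

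Since the proof is a three-line manipulation of anti-commutators, there is no substantive obstacle; the only thing one must be careful about is tracking the signs correctly (an odd number of anti-commutations must appear in the chain so that the overall commutation relation with $J$ is preserved). Conceptually, the lemma records the fact that the subspace of operators anti-commuting with $J$ is invariant under $x\mapsto \epsilon x\epsilon$ whenever $\epsilon$ itself anti-commutes with $J$, which is precisely what is needed for the Porta--Recht connection on $Gr(\h)$ to restrict to $\Lambda_J(\h)$ via the covariant derivative formula (\ref{derivada covariante}).
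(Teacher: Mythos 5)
Your proof is correct and follows essentially the same route as the paper: both use the closed form $\Pi_\epsilon(x)=\tfrac12(x-\epsilon x\epsilon)$ and move $J$ through the word $\epsilon x\epsilon$, picking up one sign per anti-commutation (three in total). The sign bookkeeping is right, so nothing further is needed.
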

\begin{proof}
Recall expression (\ref{proyeccion2}) for $\Pi_\epsilon$, $\Pi_\epsilon(x)=\frac12(x-\epsilon x\epsilon)$. Since $\epsilon$ and $x$ anti-commute with $J$, then 
$$
J\Pi_\epsilon(x)=\frac12(Jx-J\epsilon x\epsilon)=\frac12(-xJ+\epsilon x\epsilon J)=-\Pi_\epsilon(x)J.
$$
\end{proof}

\begin{prop}
Suppose that $\gamma(t)$ is a curve in  $\Lambda_J(\h)$, and $X(t)$ is a field tangent to  $\Lambda_J(\h)$ along $\gamma(t)$. Then $\frac{D}{dt}X\in (T\Lambda_J(\h))_{\gamma(t)}$.

Moreover, the submanifold  $\Lambda_J(\h)$ is geodesically convex in the following sense: if $\epsilon_0,\epsilon_1\in  \Lambda_J(\h)$ with $\|\epsilon_0-\epsilon_1\|<2$, then the unique geodesic $\delta$ joining them in $Gr(\h)$ lies in fact in  $\Lambda_J(\h)$.
\end{prop}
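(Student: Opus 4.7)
The statement splits into two claims: (i) the covariant derivative of a tangent field to $\Lambda_J(\h)$ stays tangent to $\Lambda_J(\h)$, and (ii) the unique geodesic of Theorem~1.1 joining two Lagrangian symmetries actually lies in $\Lambda_J(\h)$. Claim (i) will follow directly from the preceding lemma; claim (ii) will follow from a symmetry/uniqueness argument applied to the exponent in (\ref{geodesica2}).

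For (i), recall that by (\ref{tangentelagrangiana}) the hypothesis $X(t)\in(T\Lambda_J(\h))_{\gamma(t)}$ means that $X(t)$ is symmetric, anti-commutes with $\gamma(t)$, and anti-commutes with $J$. Since $J$ is constant, differentiating the identity $X(t)J=-JX(t)$ yields $\dot X(t)J=-J\dot X(t)$, and $\dot X(t)$ is still symmetric. The covariant derivative (\ref{derivada covariante}) gives $\tfrac{D}{dt}X=\Pi_{\gamma(t)}(\dot X(t))$, which automatically lies in $(TGr(\h))_{\gamma(t)}$ by construction of $\Pi_\epsilon$. Applying the preceding lemma with $\epsilon=\gamma(t)\in\Lambda_J(\h)$ and $x=\dot X(t)$, which anti-commutes with $J$, shows that $\Pi_{\gamma(t)}(\dot X(t))$ also anti-commutes with $J$. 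By (\ref{tangentelagrangiana}) this places $\tfrac{D}{dt}X$ in $(T\Lambda_J(\h))_{\gamma(t)}$.

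For (ii), Theorem~1.1 furnishes a geodesic $\delta(t)=e^{2tz}\epsilon_0$ with $z$ anti-symmetric, $z\epsilon_0=-\epsilon_0 z$, $e^{2z}\epsilon_0=\epsilon_1$, and $\|z\|<\pi/2$. The key step is to show that $[z,J]=0$: once this is known, $e^{2tz}$ commutes with $J$, and using $\epsilon_0 J=-J\epsilon_0$ one obtains
$$
\delta(t)J=e^{2tz}\epsilon_0 J=-e^{2tz}J\epsilon_0=-J\,e^{2tz}\epsilon_0=-J\delta(t),
$$
so $\delta(t)\in\Lambda_J(\h)$ for every $t$. To prove $[z,J]=0$, I would introduce the map $\Phi(a)=JaJ^{-1}$, which is an isometric $*$-automorphism of $\b(\h)$ (the $*$-compatibility uses $J^*=-J$, $J^{-1}=-J$). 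Since $\epsilon_0,\epsilon_1\in\Lambda_J(\h)$, we have $\Phi(\epsilon_i)=-\epsilon_i$; applying $\Phi$ to $e^{2z}\epsilon_0=\epsilon_1$ gives $e^{2\Phi(z)}\epsilon_0=\epsilon_1$. Routine checks show that $\Phi(z)$ is anti-symmetric, anti-commutes with $\epsilon_0$, and has $\|\Phi(z)\|=\|z\|<\pi/2$, so $\Phi(z)$ meets the very hypotheses that characterise $z$ uniquely in Theorem~1.1. Consequently $\Phi(z)=z$, that is, $Jz=zJ$.

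The main delicate point is the uniqueness invocation: one must verify that Theorem~1.1 determines the exponent $z$, not merely the curve $\delta$. This is built into the statement, since $e^{2z}=\epsilon_1\epsilon_0$ admits a unique logarithm of operator norm $<\pi/2$ once the hypothesis $\|\epsilon_0-\epsilon_1\|<2$ keeps the spectrum of $\epsilon_1\epsilon_0$ away from $-1$. Everything else in the argument is formal manipulation with the algebraic relations $\epsilon_i J=-J\epsilon_i$ and $J^2=-I$.
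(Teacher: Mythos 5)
Your proof is correct. The first half is exactly the paper's argument: differentiate the anti-commutation relation $X(t)J=-JX(t)$ to see that $\dot X(t)$ anti-commutes with $J$, then apply the preceding lemma to $\Pi_{\gamma(t)}(\dot X(t))$. For the second half you take a mildly different route. The paper argues directly with functional calculus: since $\epsilon_0,\epsilon_1$ both anti-commute with $J$, the product $e^{2z}=\epsilon_1\epsilon_0$ commutes with $J$, and because $\|2z\|<\pi$ the exponent $2z$ is recovered as a power series in $e^{2z}$ (the spectrum stays away from $-1$), hence $2z$ commutes with $J$ as well. You instead conjugate by the isometric $*$-automorphism $\Phi(a)=JaJ^{-1}$, check that $\Phi(z)$ satisfies the same characterising properties as $z$ (anti-symmetric, anti-commuting with $\epsilon_0$, norm $<\pi/2$, $e^{2\Phi(z)}\epsilon_0=\epsilon_1$), and invoke uniqueness to conclude $\Phi(z)=z$. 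Both arguments ultimately rest on the same fact --- the uniqueness of the skew-symmetric logarithm of norm $<\pi$ when the spectrum avoids $-1$ --- but the paper's version is more economical (nothing to verify about $\Phi(z)$), while yours is conceptually cleaner and transfers verbatim to any situation where the data is invariant under an automorphism and the solution is unique; indeed the paper reuses essentially your style of reasoning later when restricting to the orbits $\uu^{c,J}_{L_0}$. One small slip in wording: the unique logarithm of $\epsilon_1\epsilon_0$ is $2z$, of operator norm $<\pi$ (equivalently, $z$ is the unique choice of norm $<\pi/2$); your sentence says the logarithm has norm $<\pi/2$, which conflates the two. This does not affect the argument.
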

\begin{proof}
If $\gamma(t)\in \Lambda_J(\h)$ and $X(t)$ is tangent to $\Lambda_J(\h)$, then $\gamma(t)$ and $X(t)$ anti-commute with $J$ for all $t$. In particular, $\dot{X}(t)$ anti-commutes with $J$ for all $t$. Using the lemma above, it follows that $\Pi_{\gamma(t)}(\dot{X}(t))$ anti-commutes with $J$ for all $t$.

Suppose that $\epsilon_0, \epsilon_1\in \Lambda_J(\h)$ verify $\|\epsilon_0-\epsilon_1\|<2$. Let $z\in\b_{as}(\h)$ with $z\epsilon_0=-\epsilon_0 z$ and $\|z\|<\pi/2$, such that $\delta(t)=e^{2tz}\epsilon_0$ is the unique geodesic in $Gr(\h)$ joining $\epsilon_0$ and $\epsilon_1$. Then, since $\e_1$ and $\e_2$ anti-commute with $J$, $\epsilon_1\epsilon_0=e^{2z}$ commutes with $J$. The fact that $\|2z\|<\pi$, implies that $2z$ is a series in powers of $u^{2z}$ and therefore also commutes with $J$. This implies that the geodesic $\delta$ lies in $\Lambda_J(\h)$. 
\end{proof}
That symmetries lying at distance less than $2$ are joined by a geodesic  in all three manifolds considered here is more or less known or natural. The interesting question is wether symmetries at distance $2$ can be connected. Let $\epsilon_0$ and $\epsilon_1$ be two symmetries with $\|\epsilon_0-\epsilon_1\|=2$, and let $S_0$ and $S_1$ be the subspaces that they represent.
When considering the unitary equivalence of subspaces, it is natural to consider the following reducing subspaces. Denote
$$
\h_{11}=S_0\cap S_1 \ , \ \h_{00}=S_0^\perp\cap S_1^\perp \ , \ \h_{01}=S_0^\perp\cap S_1 \ , \ \h_{10}=S_0\cap S_1^\perp 
$$
and
$$
\h_0=(\h_{00}\oplus\h_{01}\oplus\h_{10}\oplus\h_{11})^\perp.
$$
The subspaces which simultaneously reduce $\epsilon_0$ and $\epsilon_1$ are $\h_{00}$, $\h_{11}$, $\h_{10}\oplus\h_{01}$ and $\h_0$. On $\h_{ii}$ both symmetries  coincide. On $\h_{10}\oplus\h_{01}$ they act as
$$
\epsilon_0|_{\h_{10}\oplus\h_{01}}=\left( \begin{array}{cc} 1 & \; 0 \\ 0 &  -1 \end{array} \right) \ \ \hbox{ and } \ \ \epsilon_1|_{\h_{10}\oplus\h_{01}}=\left( \begin{array}{cc} -1&\; 0 \\ 0 & 1 \end{array} \right) .
$$
On $\h_0$, $\epsilon_0$ and $\epsilon_1$ are said to be in generic position (for the reduced symmetries, the former intersections are trivial).

Clearly, in order to see if the symmetries are joined by a geodesic, it suffices to examine if they can be joined  in each of the reducing subspaces above. It is well known \cite{dixmier,halmos} (see also \cite{brown}), for the real and complex case, that the parts in generic position are conjugate; we transcribe the argument below. Also it is   known (see for instance \cite{brown}), that the remaining parts to study, i.e. the parts acting in $\h_{10}\oplus\h_{01}$ are conjugate if $\dim \h_{10}=\dim \h_{01}$. That this condition is sufficient (again, in the real and complex cases) is also well known. 

In \cite{halmos} Halmos showed that two projections in generic position are unitarily equivalent, more specifically, he showed that there exists a Hilbert space ${\cal K}$ and a unitary operator $w:\h_0\to {\cal K}\times {\cal K}$ such that 
$$
wp_0w^*=p_0'=\left( \begin{array}{cc} \;1\; & \; 0\; \\ 0 & 0 \end{array} \right) \ \ \hbox{ and } \ \ 
wp_1w^*=p'_1=\left( \begin{array}{cc} c^2 & \;cs \\ cs & s^2 \end{array} \right),
$$
where $c,s$ are positive commuting contractions acting in ${\cal K}$ and satisfying $c^2+s^2=1$, and $\ker c=\ker s=\{0\}$. Then there exists an anti-symmetric operator $y$ acting on ${\cal K}\times {\cal K}$, which is a co-diagonal matrix, and  such that $e^yp'_0e^{-y}=p'_1$. In that case, the element $z_0=w^*yw$ is an anti-symmetric operator in $\h_0$, which verifies $e^{z_0}p_0e^{-z_0}=p_1$, and is co-diagonal with respect to $p_0$ (or equivalently, anti-commutes with $\epsilon_0$). Moreover, note that  $\|y\|\le \pi/2$, so that  $\|z_0\|\le\pi/2$. Let us prove these facts. By a functional calculus argument, there exists a self-adjoint operator $x$ in the  with $\|x\|\le \pi/2$,  such that $c=\cos(x)$ and $s=\sin(x)$. Consider
$$
y=\left( \begin{array}{cc} 0 & -x \\ x & \; 0 \end{array} \right)
$$
Clearly $y^*=-y$, $\|y\|\le \pi/2$. A straightforward computation shows that 
$$
e^yp'_0e^{-y}=p'_1.
$$
We shall call this construction Halmos' trick.

Let us consider now the Lagrangian Grassmannian.
\begin{thm}\label{hopfrinowlagrange}
If $\epsilon_0,\epsilon_1\in\Lambda_J(\h)$, then there exists a geodesic joining them in $\Lambda_J(\h)$.
The geodesic is of the form $\delta(t)=e^{2tz}\epsilon_0$, with $zJ=Jz$  and $\|z\|\le\pi/2$.
If $\|\e_0-\e_1\|<2$ the geodesic is unique.

\end{thm}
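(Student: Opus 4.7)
The plan is to split according to whether $\|\epsilon_0-\epsilon_1\|<2$ or $\|\epsilon_0-\epsilon_1\|=2$. In the first case, the preceding Proposition together with the uniqueness statement in the $Gr(\h)$-theorem cited above yield the result directly: the unique geodesic of $Gr(\h)$ joining $\epsilon_0$ and $\epsilon_1$ already lies in $\Lambda_J(\h)$, so existence, uniqueness, and the bound $\|z\|<\pi/2$ all come for free.

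For the case $\|\epsilon_0-\epsilon_1\|=2$ the plan is to exploit the decomposition $\h=\h_{00}\oplus\h_{11}\oplus(\h_{10}\oplus\h_{01})\oplus\h_0$ into subspaces simultaneously reducing $\epsilon_0$ and $\epsilon_1$, and construct $z$ block by block. The key preliminary observation is that this decomposition is compatible with $J$: the Lagrangian relations $J\epsilon_i=-\epsilon_i J$ give $J(S_i)=S_i^\perp$, so $J$ swaps $\h_{11}$ with $\h_{00}$, swaps $\h_{10}$ with $\h_{01}$, and therefore preserves $\h_0$. Each summand $\h_{00}\oplus\h_{11}$, $\h_{10}\oplus\h_{01}$ and $\h_0$ is thus $J$-invariant, and on each of them the problem is to find an anti-symmetric operator $z$ commuting with $J$, anti-commuting with $\epsilon_0$, satisfying $e^z\epsilon_0 e^{-z}=\epsilon_1$, and with $\|z\|\le\pi/2$.

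On $\h_{00}\oplus\h_{11}$ the two symmetries agree and $z=0$ suffices. On $\h_{10}\oplus\h_{01}$, I use the restriction of $J$ to identify $\h_{01}$ with $\h_{10}$, so the block becomes $\h_{10}\oplus\h_{10}$ with $J=\left(\begin{array}{cc} 0 & -I \\ I & 0 \end{array}\right)$, $\epsilon_0=\left(\begin{array}{cc} I & 0 \\ 0 & -I \end{array}\right)$ and $\epsilon_1=-\epsilon_0$; then $z=(\pi/2)J$ restricted to this block visibly works and attains $\|z\|=\pi/2$. The substantive case is $\h_0$, where $\epsilon_0$ and $\epsilon_1$ are in generic position. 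Using the isometry $J:S_0\cap\h_0\to S_0^\perp\cap\h_0$ to identify $\h_0\simeq\k\oplus\k$ with $\k=S_0\cap\h_0$, one obtains $p_0=\left(\begin{array}{cc} I & 0 \\ 0 & 0 \end{array}\right)$ and $J=\left(\begin{array}{cc} 0 & -I \\ I & 0 \end{array}\right)$, while the Lagrangian condition $p_1 J+Jp_1=J$ together with $p_1^*=p_1=p_1^2$ forces $p_1=\left(\begin{array}{cc} a & b \\ b & I-a \end{array}\right)$ with commuting self-adjoint $a, b$ on $\k$ satisfying $b^2=a(I-a)$. I then set $A=\frac{1}{2}\arg\bigl((2a-1)+2ib\bigr)$ by Borel functional calculus on the commutative algebra generated by $a$ and $b$, and take $z_0=\left(\begin{array}{cc} 0 & -A \\ A & 0 \end{array}\right)$. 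A block computation gives $\cos^2 A=a$ and $\cos A\sin A=b$, so $e^{z_0}p_0 e^{-z_0}=p_1$; and directly $z_0$ is anti-symmetric, commutes with $J$, anti-commutes with $\epsilon_0$, and has $\|z_0\|\le\pi/2$.

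The delicate point is the definition of $A$ when $0$ or $1$ lies in the continuous spectrum of $a$: then $\sqrt{a(I-a)}$ is not invertible, so one cannot extract a ``sign'' of $b$ by division and the naive polar decomposition of Halmos is unavailable. The Borel functional calculus bypasses this, using that generic position rules out $0$ and $1$ as \emph{eigenvalues} of $a$: hence $-1$ is not an eigenvalue of the unitary $(2a-1)+2ib$, its spectral measure at $\{-1\}$ vanishes, and any Borel extension of $\arg$ produces the same bounded self-adjoint $A$ with $\|A\|\le\pi/2$. Assembling the three blocks yields a single $z$ on all of $\h$ meeting all the requirements, which completes the proof.
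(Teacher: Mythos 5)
Your proof is correct, and while it follows the paper's skeleton exactly up to the last step --- the same decomposition $\h=\h_{00}\oplus\h_{11}\oplus(\h_{10}\oplus\h_{01})\oplus\h_0$, the same observation that $J$ permutes these blocks and hence preserves each summand, and the same choice $z=\frac{\pi}{2}J$ on $\h_{10}\oplus\h_{01}$ --- it diverges genuinely on the generic part, which is where the substance lies. The paper first invokes Halmos' theorem to put the pair $(p_0,p_1)$ in the normal form with positive commuting $c,s$ having trivial kernels, and only afterwards analyzes what the transported complex structure $J'$ must look like, showing $J'$ is off-diagonal with a unitary entry $j$ that commutes with $c$ and $s$, so that the standard Halmos rotation $x=\arcsin(s)$ commutes with $J'$. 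You instead put $J$ itself in normal form from the start (identifying $\h_0\simeq\k\oplus\k$ via $J|_{S_0\cap\h_0}$), read off from the Lagrangian identity $p_1J+Jp_1=J$ and $p_1^2=p_1=p_1^*$ that $p_1$ has entries $a$, $b$, $b$, $I-a$ with $a,b$ commuting self-adjoint and $b^2=a(I-a)$, and extract the rotation angle as $A=\frac12\arg((2a-I)+2ib)$ of the unitary $(2a-I)+2ib$. This buys two things: it bypasses Halmos' theorem entirely (the normal form of $p_1$ falls out of the Lagrangian condition), and it makes explicit the one point where well-definedness could fail --- the branch of $\arg$ at $-1$, i.e.\ the joint eigenvalue $(a,b)=(0,0)$ --- and shows it is excluded precisely by $S_0\cap S_1^\perp=\{0\}$ (note $a\xi=0$ already forces $b\xi=0$ since $b^2=a(I-a)$); in the paper this subtlety is absorbed into the hypothesis $\ker c=\ker s=\{0\}$ of Halmos' normal form. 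The price is a small amount of care with the joint Borel functional calculus of commuting symmetric operators on a \emph{real} Hilbert space (one complexifies and checks that the real-valued function $\arg$ yields an operator commuting with the canonical conjugation, hence restricting to $\k$); this works, but it is worth saying a sentence about it. The remaining bookkeeping --- that the assembled $z$ is anti-symmetric, commutes with $J$, anti-commutes with $\epsilon_0$, has $\|z\|\le\pi/2$, and satisfies $e^{z}\epsilon_0e^{-z}=\epsilon_1$, and that the $\|\epsilon_0-\epsilon_1\|<2$ case together with uniqueness follows from the preceding Proposition --- matches the paper.
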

\begin{proof}
Let $\epsilon_0, \epsilon_1\in \Lambda_J (\h)$ correspond to Lagrangian subspaces $L_0$ and $L_1$. Note that $J(L_0)=L_0^\perp$,  $J(L_1)=L_1^\perp$, and that $J$ is an orthogonal transformation, imply that
\begin{equation}\label{Jsereduce}
J(\h_{00})=\h_{11} \ , \ J(\h_{11})=\h_{00} \ , \ J(\h_{01})=\h_{10}  \ \hbox{ and } \ J(\h_{10})=\h_{01}.
\end{equation}
Therefore $J(\h_0)=\h_0$. Then it is also clear that $L_0\cap \h_0$ and $L_1\cap \h_0$ are Lagrangian subspaces of $\h_0$ (corresponding to the complex structure $J|_{\h_0}$ in $\h_0$), which are represented by the symmetries $\e'_0$ and $\e'_1$, the reductions of $\e_0$ and $\e_1$ to $\h_0$. Another consequence of (\ref{Jsereduce}), is that the operator 
$$
z_2=\frac{\pi}{2} \ J|_{\h_{01}\oplus\h_{10}}:\h_{01}\oplus\h_{10}\to \h_{01}\oplus\h_{10}
$$
implements the unitary equivalence between the parts of $\e_0$ and $\e_1$ to $\h_{01}\oplus\h_{10}$, and clearly $\|z_2\|=\pi/2$. Indeed, first note that $J|_{\h_{01}\oplus\h_{10}}$ is a complex operator because it commutes with $J$. Next, since
 $v=J|_{\h_{10}}:\h_{10}\to \h_{01}$ is a surjective isometry, then 
$$
u: \h_{01}\oplus \h_{10}\to \h_{01}\oplus \h_{10} \ , u(\xi'+\xi'')=v^*\xi'+v\xi'',
$$
is a unitary operator. In matrix form (in terms of the decomposition $\h_{01}\oplus \h_{10}$), 
$$
u=
\left(
\begin{array}{ll}
0 & v \\ v^* & 0
\end{array}
\right).
$$
Apparently, $u p|_{\h_{01}\oplus \h_{10}} u^*=q_1|_{\h_{01}\oplus \h_{10}}$. Finally, a straightforward matrix computation shows that $e^{z_2}=u$.

Thus we have to prove that the reductions $\e'_0$ and $\e'_1$ to the generic part are conjugate. Using Halmos' trick, we may consider the projections 
$$
p''_0= \left( \begin{array}{rc} 1 & \;0 \\ 0 & 0 \end{array} \right) \ \ \hbox{ and } \ \ 
p''_1=\left( \begin{array}{cc} c^2 & cs \\ cs & s^2 \end{array} \right) .
$$
These projections are obtained from the original projections given by $\e_0,\e_1$ and unitary equivalence with a unitary operator $w:\h_0\to {\cal K}\times {\cal K}$. Denote by $J'$ the corresponding complex structure in ${\cal K}\times {\cal K}$: $J'=w J|_{\h_{01}\oplus\h_{10}} w^*$. Let $\e''_0$ and $\e''_1$ the symmetries (in $\Lambda_J({\cal K}\times {\cal K})$ of the complex structure $J'$) given by $p''_0$ and $p''_1$. 
First note that $\e''_0$ is a $J'$ Lagrangian means that 
$$
\left( \begin{array}{cc} 1 & \; 0 \\ 0 & -1 \end{array} \right)  \-\hbox{ and } \ J'
$$
commute, which implies that
$$
J'=\left( \begin{array}{cc} 0 & j \\ -j^* & 0 \end{array} \right)
$$
with $j$ a unitary operator in ${\cal K}$. Also $J'$ anti-commutes with 
$$
\e''_1=\left( \begin{array}{cc} 2c^2-1 & 2cs \\ 2cs & 2s^2-1 \end{array} \right).
$$ 
After straightforward matrix computations one obtains that this implies
$$
\left\{ \begin{array}
csj^*= jcs \\
(2c^2-1)j=-j(2s^2-1) \end{array} \right.
$$
The second equation gives $j^*(2c^2-1)j=-2s^2+1$, or equivalently, $(j^*cj)^2+s^2=1$. This relation together with $c^2+s^2=1$ imply that $(j^*cj)^2=c^2$, and therefore $c$ and $j$ commute, because $c\ge 0$. Then $s$ also commutes with $j$. The first equation  can thus be rewritten $csj^*=cjs$, and since $\ker c=\{0\}$ one has $sj^*=js$.
Using these  relations, a straightforward computation shows that 
$$
\left( \begin{array}{cc} c & -s \\ s & \; c \end{array} \right)\  \hbox{ commutes with } \ J'.
$$
Note that since $0\le c,s\le 1$, $cs=sc$ and $c^2+s^2=1$, there exists $0\le x\in \b({\cal K})$ with $\|x\|\le \pi/2$ such that $c=\cos(x)$ and $s=\sin(x)$. 
The right hand matrix above is the exponential of 
$$
z'_0=\left( \begin{array}{cc} 0 & -x \\ x & \;0 \end{array} \right),
$$ 
Moreover, since $c,s$ commute with $j$, $x$ commutes with $J'$ and therefore $z'_0$ is a complex operator.
\end{proof}

\section{Finsler metric}

In \cite{pr} Porta and Recht showed that the unique minimal geodesic joining two elements $\e_0 , \e_1$ in $Gr(\h)$ or $Gr(\h_J)$ with $\|\e_0-\e_1\|<2$, has minimal length along its path if one considers these manifolds with the Finsler metric which consists of endowing each tangent space with the usual operator norm. That is, the length $L(\gamma)$ of a curve $\gamma$ parametrized in the interval $[t_0,t_1]$ is measured by
$$
L(\gamma)=\int_{t_0}^{t_1} \|\dot{\gamma}(t)\| d t .
$$
For $M=Gr(\h)$, $Gr(\h_J)$ or $\Lambda_J(\h)$, denote by $d$ the rectifiable metric
$$
d(\e_0,\e_1)=\inf\{L(\gamma): \gamma \hbox{ joins } \e_0 \hbox{ and } \e_1 \hbox{ in } M\}.
$$
If $S_0$ (resp. $S_1$) is the subspace given by $\e_0$ (resp. $\e_1$), in the previous section it was shown that if $\|e_0-\e_1\|=2$ and $\dim(S_0\cap S_1^\perp)=\dim(s_0^\perp\cap S_1)$, then there exists a geodesic $\delta(t)=e^{2tz}\e_0$ which joins them, with $\|z\|=\pi/2$. Let us show that this geodesic is also minimal along its path. 

\begin{prop}
Let $\e_0,\e_1\in Gr(\h)$ (resp. $Gr(\h_J)$) with $\|\e_0-\e_1\|=2$ and $\dim (S_0\cap S_1^\perp)=\dim (S_0^\perp\cap S_1)$. Then the geodesic $\delta(t)=e^{2tz}\e_0$ which joins $\e_0$ and $\e_1$  has minimal length along its path in the interval $[0,1]$.
 \end{prop}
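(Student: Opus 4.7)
The plan is to compute $L(\delta|_{[0,1]})$ directly and then to bound below the length of any competing curve $\gamma$ by $\pi$. For the first step, since $z$ anticommutes with $\e_0$, one has $\dot\delta(t)=2ze^{2tz}\e_0=2z\delta(t)$; as $\delta(t)$ is a symmetry and hence an orthogonal (or unitary) operator, $\|\dot\delta(t)\|=2\|z\|=\pi$, so $L(\delta|_{[0,1]})=\pi$. It remains to verify $L(\gamma)\ge\pi$ for every piecewise smooth curve $\gamma:[0,1]\to M$ joining $\e_0$ to $\e_1$, where $M$ denotes $Gr(\h)$ or $Gr(\h_J)$, respectively.

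The key idea is to pass to the Hilbert sphere by applying $\gamma(t)$ to a well-chosen unit vector $\xi\in\h$. Since each $\gamma(t)$ is an isometry, the curve $c(t)=\gamma(t)\xi$ lies on the unit sphere of $\h$, and $\|\dot c(t)\|=\|\dot\gamma(t)\xi\|\le\|\dot\gamma(t)\|$, so $L(\gamma)\ge L(c)$. A standard sphere estimate, obtained by writing $c(t)=f(t)\xi+r(t)$ with $r(t)\perp\xi$, using $\|c(t)\|=1$ to deduce $\|\dot c(t)\|^2\ge (f'(t))^2/(1-f(t)^2)$ and integrating, yields $L(c)\ge\arccos\langle\xi,\gamma(1)\xi\rangle=\arccos\langle\xi,\e_1\xi\rangle$. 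The objective is then to select $\xi$ making this arccosine equal to, or arbitrarily close to, $\pi$.

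Two cases arise from the Halmos decomposition of $\h$ adapted to $S_0,S_1$. If $\h_{10}\oplus\h_{01}\ne\{0\}$, pick any unit $\xi\in\h_{10}=S_0\cap S_1^\perp$: then $\e_0\xi=\xi$, $\e_1\xi=-\xi$, so $\langle\xi,\e_1\xi\rangle=-1$ and $L(\gamma)\ge\pi$ at once. If $\h_{10}\oplus\h_{01}=\{0\}$, then $\e_0$ and $\e_1$ are in generic position on $\h_0$, and via Halmos' trick one may realize $\h_0\cong\k\times\k$ with $p_0$ the projection onto the first factor and $p_1$ the $2\times 2$ block matrix of entries $c^2,cs,cs,s^2$. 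The condition $\|\e_0-\e_1\|=2$ forces $\|s\|=1$, so there exist unit vectors $\xi_n\in\k$ with $\|c\xi_n\|\to 0$. Letting $\eta_n$ correspond to $(\xi_n,0)\in\k\times\k$, a direct computation gives $\e_0\eta_n=\eta_n$ and $\|(\e_1+I)\eta_n\|^2=4\|c\xi_n\|^2$, hence $\langle\eta_n,\e_1\eta_n\rangle=-1+2\|c\xi_n\|^2\to -1$. Applying the sphere estimate with $\xi=\eta_n$ yields $L(\gamma)\ge\arccos(-1+2\|c\xi_n\|^2)$, and letting $n\to\infty$ one concludes $L(\gamma)\ge\pi$.

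The main obstacle is the generic subcase: no genuine $+1$-eigenvector of $\e_0$ is a $-1$-eigenvector of $\e_1$, so the argument must proceed by approximate antipodes $\eta_n$ produced by Halmos' trick and a limit procedure, which relies on the continuous dependence of the sphere length bound on its endpoints. The remaining two subcases are then straightforward and the resulting inequality $L(\gamma)\ge\pi=L(\delta|_{[0,1]})$ gives minimality of $\delta$ along its path.
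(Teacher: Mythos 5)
Your proof is correct, but it takes a genuinely different route from the paper's. The paper deduces the boundary case $\|z\|=\pi/2$ from the already-established Porta--Recht minimality theorem for symmetries at distance $<2$: it observes that $\delta$ restricted to any proper subinterval is the unique minimal geodesic between its endpoints, and then rules out a strictly shorter competitor $\gamma$ by concatenating $\gamma$ with a short terminal piece of $\delta$ and comparing with the minimal geodesic $\delta|_{[0,t_r]}$ --- a soft limiting/triangle-inequality argument that never produces an explicit lower bound. You instead prove the bound $L(\gamma)\ge\pi$ from scratch by pushing curves down to the unit sphere via a well-chosen unit vector: an exact common eigenvector $\xi\in S_0\cap S_1^\perp$ when $\h_{10}\ne\{0\}$, and approximate antipodes $\eta_n$ extracted from the spectral condition $\|s\|=1$ in the generic part otherwise. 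This is essentially the mechanism behind the Porta--Recht theorem itself, extended to the boundary case by approximate eigenvectors, so your argument is self-contained (it does not invoke the $<2$ result at all) and yields the explicit estimate $L(\gamma)\ge\arccos\langle\xi,\e_1\xi\rangle$; the price is the extra case analysis through the Halmos decomposition, which the paper's concatenation argument avoids entirely. Two small points to tidy up: the sphere inequality $\|\dot c\|^2\ge (f')^2/(1-f^2)$ needs a word about the instants where $f=\pm1$ (split the interval or integrate $|\tfrac{d}{dt}\arccos f|$ directly), and in the first case you should note that the hypothesis $\dim\h_{10}=\dim\h_{01}$ is what guarantees $\h_{10}\ne\{0\}$ whenever $\h_{10}\oplus\h_{01}\ne\{0\}$ (alternatively, a unit vector of $\h_{01}$ works symmetrically, with $\arccos f(0)=\pi$ and $\arccos f(1)=0$).
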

 \begin{proof}
The geodesic that joins $\e_0$ and $\e_1$ verifies $\|z\|=\pi/2$. Then, if $[t_1,t_2]$ is a proper sub-interval of $[0,1]$, then $\delta|_{[t_0,t_1]}$ is the unique geodesic joining them because $\|\delta(t_0)-\delta(t_1)\|<2$, and it has therefore minimal length. Therefore it remains to examine the case $t_0=0$ and $t_1=1$. Suppose that there exists a curve $\gamma$ with $L(\gamma)<L(\delta)+r$, ($r>0$) with $\gamma(0)=\e_0$ and $\gamma(1)=\e_1$. There exists $t_r\in [0,1)$ such that if $\delta_r$ denotes the curve $\delta|_{[0,t_r]}$ and $\delta^r$  denotes $\delta|_{[t_r,1]}$, then $L(\delta^r)<r/2$. Note that $\delta_r$ has minimal length joining $\epsilon_0$ and $\delta(t_r)$.  Then if $\nu$ denotes the curve $\gamma$ followed by $\delta^r$, one has that $\nu$ joins $\epsilon_0$ and $\delta(t_r)$, with
 $$
 L(\nu)=L(\gamma)+L(\delta^r)\le L(\delta)-r+L(\delta^r)=L(\delta_r)-r+2 L(\delta_r)<L((\delta_r),
 $$
 a contradiction. 
 \end{proof}
There is a partial converse to this fact, which is a direct consequence of Theorem 9 in Brown's paper \cite{brown}. Brown proves that if, in the notation of the previous section, $\dim(\h_{01})\ne \dim(\h_{10})$ and $d(\e_0,\e_1)=\pi$, then $\e_0$ and $\e_1$ are not joined by a minimizing path.
 \begin{prop}
Suppose that there exists a geodesic $\delta(t)=e^{2tz}\e_0$ joining $\e_0$ and $\e_1$ with $\|z\|=\pi/2$. Then $\dim(\h_{01})= \dim(\h_{10})$.
 \end{prop}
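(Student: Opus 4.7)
My plan is to identify $\h_{10}\oplus\h_{01}$ directly with the $-1$ eigenspace of $e^{2z}$ via the structure of $z$, from which the dimension equality is visible by polar decomposition. (An alternative route would invoke Brown's Theorem~9 after verifying that $\delta$ is minimizing and $d(\e_0,\e_1)=\pi$, but the direct argument is cleaner and self-contained.)

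Since $z\e_0 = -\e_0 z$, I write $z$ in co-diagonal block form relative to $\h = S_0\oplus S_0^\perp$:
$$
z = \left(\begin{array}{cc} 0 & -b^* \\ b & 0 \end{array}\right), \qquad b:S_0\to S_0^\perp,
$$
so $-z^2$ is block-diagonal with blocks $b^*b$ and $bb^*$, and $\|b\| = \|z\| = \pi/2$. Since $z$ is anti-self-adjoint, on each $\lambda$-eigenspace of the self-adjoint operator $-z^2$ (with $\lambda>0$) the restriction $z/\sqrt\lambda$ is a complex structure $J$, whence $e^{2z} = \cos(2\sqrt\lambda)I + \sin(2\sqrt\lambda)J$ there; this equals $-I$ exactly when $\lambda = (\pi/2)^2$. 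Setting $K_0 := \ker(b^*b - (\pi/2)^2 I)\subset S_0$ and $K_1 := \ker(bb^* - (\pi/2)^2 I)\subset S_0^\perp$, I conclude $\ker(I+e^{2z}) = K_0\oplus K_1$. Polar decomposition of $b$ then gives that $(2/\pi)b$ restricts to an isometric bijection $K_0\to K_1$, so $\dim K_0 = \dim K_1$.

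It remains to match $K_0$ with $\h_{10}$ and $K_1$ with $\h_{01}$. On the one hand, $\h_{10}\oplus\h_{01}$ reduces both symmetries with $\e_1\e_0 = -I$, and since $\e_1 = e^{2z}\e_0$ rearranges as $e^{2z} = \e_1\e_0$, we obtain $\h_{10}\oplus\h_{01}\subset K_0\oplus K_1$; intersecting with $S_0$ and $S_0^\perp$ yields $\h_{10}\subset K_0$ and $\h_{01}\subset K_1$. For the reverse, $\e_0 z\e_0 = -z$ implies $\e_0$ commutes with $-z^2$ and so preserves the eigenspace $K_0\oplus K_1$; together with $e^{2z}$, this forces $\e_1 = e^{2z}\e_0$ to preserve this reducing subspace, and on it $\e_1 = -\e_0$. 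Hence the $+1$ eigenspace of $\e_0|_{K_0\oplus K_1}$, namely $K_0$, lies in the $-1$ eigenspace of $\e_1$, giving $K_0\subset S_0\cap S_1^\perp = \h_{10}$; symmetrically $K_1\subset\h_{01}$. Combined with the dimension equality above, this yields $\dim\h_{10} = \dim\h_{01}$.

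The main technical obstacle is the spectral identification $\ker(I+e^{2z}) = K_0\oplus K_1$: the block expansion of $e^{2z}$ carries nontrivial off-diagonal terms of the form $b\sin(2|b|)/|b|$, and one must see that these vanish precisely on the $(\pi/2)^2$-eigenspace of $-z^2$ because $\sin(\pi)=0$. The subsequent identifications with $\h_{10}$ and $\h_{01}$ are then pure bookkeeping with reducing subspaces.
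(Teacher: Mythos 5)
Your proof is correct, but it takes a genuinely different route from the paper's. The paper disposes of this in two lines: by the preceding proposition the geodesic with $\|z\|=\pi/2$ is minimizing, so $d(\e_0,\e_1)=L(\delta)=\pi$, and then Brown's Theorem 9 (which asserts that when $\dim\h_{01}\ne\dim\h_{10}$ and $d(\e_0,\e_1)=\pi$ the symmetries are not joined by a minimizing path) forces the dimensions to agree. You instead give a direct, self-contained spectral argument: writing $z$ co-diagonally with block $b$, you identify $\ker(I+e^{2z})$ with $\ker(b^*b-(\pi/2)^2)\oplus\ker(bb^*-(\pi/2)^2)$, show this sum equals $\h_{10}\oplus\h_{01}$ by tracking the reducing subspaces, and read off the dimension equality from the polar decomposition of $b$. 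This buys independence from Brown's external result and from the minimality machinery altogether, and it yields the sharper statement that $\h_{10}$ and $\h_{01}$ are exactly the top eigenspaces of $b^*b$ and $bb^*$, with $(2/\pi)b$ a unitary between them. The one point to write out carefully is the inclusion $\ker(I+e^{2z})\subset\ker(-z^2-(\pi/2)^2)$ when $-z^2$ has continuous spectrum (your eigenspace phrasing implicitly assumes a complete set of eigenvectors); it follows from functional calculus, e.g. for $\xi\in\ker(I+e^{2z})$ one gets $\left<(\cos(2|z|)+I)\xi,\xi\right>=0$ since the anti-symmetric part of $e^{2z}$ contributes nothing, and then $\cos(2t)+1\ge\frac{8}{\pi^2}(\frac{\pi}{2}-t)^2$ on $[0,\pi/2]$ pins $\xi$ to $\ker(|z|-\pi/2)$. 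You flag exactly this point as the technical crux and describe its resolution correctly, so I regard the argument as complete.
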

 \begin{proof}
By the previous proposition, $\delta$ is minimizing, with $L(\delta)=\pi$. Thus $d(\e_0,\e_1)=\pi$, and therefore by Browns's result, $\dim(\h_{01})= \dim(\h_{10})$.
 \end{proof}
 
For the Lagrangian Grassmannian one has the following stronger result:
\begin{thm}
Let $\e_0,\e_1\in \Lambda_J(\h)$. Then there exists a minimal geodesic joining them. The geodesic is unique if $\|\e_0-\e_1\|<2$.
\end{thm}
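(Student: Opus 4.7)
The plan is to combine the existence result already proved with the minimality statements available for the ambient Grassmannian. Theorem~\ref{hopfrinowlagrange} produces a geodesic $\delta(t)=e^{2tz}\e_0$ in $\Lambda_J(\h)$ with $zJ=Jz$ and $\|z\|\le\pi/2$, together with the uniqueness statement when $\|\e_0-\e_1\|<2$, so all that remains is the \emph{minimality} of $\delta$ among curves in $\Lambda_J(\h)$ joining $\e_0$ and $\e_1$. The key structural remark is that the Finsler length on $\Lambda_J(\h)$ is simply the restriction of the Finsler length on $Gr(\h)$: every curve in $\Lambda_J(\h)$ is also a curve of the same length in $Gr(\h)$, hence any minimality result proved in the ambient manifold descends to $\Lambda_J(\h)$ as long as the minimizing curve itself lies in $\Lambda_J(\h)$. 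I will split according to whether $\|\e_0-\e_1\|<2$ or $\|\e_0-\e_1\|=2$ and invoke the corresponding ambient minimality in each case.

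For $\|\e_0-\e_1\|<2$, first translate the distance condition into a bound on $\|z\|$. Since $\e_1=e^{2z}\e_0$, we have $\|\e_0-\e_1\|=\|I-e^{2z}\|$; the anti-symmetric $z$ has $iz$ self-adjoint with spectrum in $[-\|z\|,\|z\|]\subset[-\pi/2,\pi/2]$, so the spectral mapping theorem gives $\|I-e^{2z}\|=\sup_{|\lambda|\le\|z\|}|1-e^{2i\lambda}|=2\sin\|z\|$. Thus $\|z\|<\pi/2$, and the Porta--Recht minimality theorem recalled at the opening of Section~3 asserts that $\delta$ is the unique minimal curve in $Gr(\h)$ from $\e_0$ to $\e_1$. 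By the ambient-length remark $\delta$ is therefore also the unique minimal curve in $\Lambda_J(\h)$, which simultaneously gives minimality and uniqueness in this case.

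For $\|\e_0-\e_1\|=2$, the same identity $\|\e_0-\e_1\|=2\sin\|z\|$ forces $\|z\|=\pi/2$. Here the Lagrangian structure supplies the dimension hypothesis of the first proposition of Section~3 for free: by~(\ref{Jsereduce}), $J$ restricts to a surjective isometry $\h_{10}\to\h_{01}$, so $\dim\h_{10}=\dim\h_{01}$ automatically. That proposition then applies in the ambient $Gr(\h)$ to conclude that $\delta$ minimizes length along $[0,1]$ in $Gr(\h)$, and once again the conclusion descends to $\Lambda_J(\h)$. The substantive inputs are therefore only the spectral identity $\|I-e^{2z}\|=2\sin\|z\|$ and the automatic verification of the dimension condition via~(\ref{Jsereduce}); neither poses a serious obstacle, and the interest of the statement is precisely that in the Lagrangian setting the exceptional case $\dim\h_{10}\ne\dim\h_{01}$ which can defeat minimality in $Gr(\h)$ simply cannot occur.
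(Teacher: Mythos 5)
Your proof is correct and follows essentially the same route as the paper: existence and uniqueness come from Theorem~\ref{hopfrinowlagrange}, and minimality is inherited from the ambient $Gr(\h)$ via the Porta--Recht result when $\|z\|<\pi/2$ and the first proposition of Section~3 when $\|z\|=\pi/2$, exactly as in the paper's two-line argument. Your explicit case split, the identity $\|\e_0-\e_1\|=2\sin\|z\|$, and the verification that $\dim\h_{10}=\dim\h_{01}$ follows automatically from $J(\h_{10})=\h_{01}$ simply fill in details the paper leaves implicit.
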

\begin{proof}
From Theorem \ref{hopfrinowlagrange} we know that there exists a geodesic $\delta$ with $\|z\|\le \pi/2$. By the proposition above, it is minimal in $Gr(\h)$, a fortiori it is minimal in $\Lambda_J(\h)$.
\end{proof}

\begin{exmp}
Consider $\k$ a Hilbert space and let $\h=\k\times \k$ with the usual Hilbert space inner product. Let $J:\h\to\h$ be given by $J(\xi,\eta)=(-\eta,\xi)$. Then it is well known that graphs of symmetric operators $a:\k\to \k$ are Lagrangian subspaces of $\h$. One may even consider unbounded symmetric operators, and a straightforward argument shows that 
$$
a:D(a)\subset\k\to \k
$$
is closed and symmetric and $G_a$ denotes its graph, then $G_a\in\Lambda_J(\h)$ if and only if $D(a)=D(a^*)$, i.e. $a$ is self-adjoint. The above results show that for any pair of self-adjoint operators $a,b$ there exists an anti-hermitic complex operator $z$ in $\h_J$ with $\|z\|\le \pi/2$ such that $e^{z}(G_a)=G_b$, and the curve $e^{tz}(G_a)$ is a minimal geodesic in sense mentioned before. Note that the fact that this curve is a geodesic implies that $z$ anti-commutes with the symmetry $\e_{G_a}=2p_{G_a}-I$ corresponding to the subspace $G_a$ ($p_{G_a}$ denotes the orthogonal projection onto $G_a$).   
Another example of a Lagrangian subspace, which is not of this type above, is $L_0=\{0\}\times \k$.
In some of these examples it is easy  to characterize the operator $z$. 
\begin{enumerate}
\item
Consider $L_0$ as above and $b:D(b)\subset\k\to \k$ an arbitrary self-adjoint operator. Then $\e_{L_0}$ is given by the matrix (in terms of the decomposition $\h=\k\times\k$)
$$
\e_{L_0}=\left( \begin{array}{cc} -I & \;0 \\ 0  & I  \end{array} \right).
$$
Note also that $J$ is, in matrix form
$$
J=\left( \begin{array}{cc} 0 & -I \\ I  & \; 0  \end{array} \right).
$$
The operator $z$ which is the velocity of a minimal geodesic joining $L_0$ and $G_b$ must therefore verify
$$
zJ=Jz \ \ \hbox{ and } \ \ z\e_{L_0}=-e_{L_0} z.
$$
These imply that $z$ has matrix form
$$
z=\left( \begin{array}{cc} 0 & x \\ -x  & \;0  \end{array} \right),
$$
with $x$ symmetric in $\k$, and $\|x\|\le \pi/2$. Then $e^z$ can be explicitely computed, namely
$$
e^z=\left( \begin{array}{cc} \cos(x) & \sin(x) \\ -\sin(x)  & \cos(x)  \end{array} \right).
$$
Then the fact that $e^z(L_0)=G_b$ means the following: for each $\eta\in D(b)$ there exists $\xi\in \k$ such that $\sin(x)\xi=\eta$ and $\cos(x)\xi=b\eta$. In particular, $\sin(x)$ is injective with dense range equal to $D(b)$ and
$$
b \sin(x)=\cos(x) \ , \ \ \hbox{ or equivalently } b= \cos(x) (\sin(x))^{-1},
$$
with $(\sin(x))^{-1}:D(b)\to \k$ the (unbounded) inverse of $\sin(x)$.
The element $x$ is unique if and only in $\|p_{L_0}-p_{G_b}\|<1$, a fact which is apparently equivalent to $b$ being invertible.
\item
Consider $a=I$ and $b$ arbitrary. Then a straightforward computation shows that
$$
p_{G_I}=\frac12 \left( \begin{array}{cc}  I &  \; I \\  I  &  I  \end{array} \right)  \ \ \hbox{ and }
\ \ \e_{G_I}=\left( \begin{array}{cc} 0 & I \\ I  & \; 0  \end{array} \right).
$$
Again in this case the operator $z$ is of the codiagonal  form described above. In this case we obtain that for any $\eta\in D(b)$ there exists $\xi\in\k$ such that $\sin(x)\xi+\cos(x)\xi=\eta$ (implying that $\sin(x)+\cos(x)$ is injective with closed range $D(b)$) and $-\sin(x)\xi=\cos(x)\xi=b\eta$, and in particular
$$
b=(-\sin(x)+\cos(x))(\sin(x)+\cos(x))^{-1}.
$$
\item
Fix a symmetry $e\in \b(\k)$ ($e^*=e$, $e^2=1$). Elementary calculations show that $p_{G_e}$ and $\e_{G_e}$ are given by
$$
p_{G_e}=\frac12\left( \begin{array}{cc}  I & \; e \\ e  &  I  \end{array} \right)  \ \ \hbox{ and }
\ \ \e_{G_e}=\left( \begin{array}{cc} 0 & \;e \\ e  & 0  \end{array} \right).
$$
There is a geodesic connecting $G_I$ and $G_e$, which by the previous item is given by an operator $z$ in $\h$ of the form
$$
z=\left( \begin{array}{cc}  0 & x \\ -x  &  0  \end{array} \right),
$$
with $x$ self-adjoint in $\k$. By comparing the final point of the geodesic $\delta(t)=e^{2tz}\e_{G_I}$ with $\e_{G_e}$, one obtains that
$$
\cos(2x)=e \ \ \hbox{ and } \sin(2x)=0,
$$
which means that $x$ is of the form $x=\frac{\pi}{2}(p_+-p_-)$ where $p_+$ and $p_-$ are mutually orthogonal projections in $\k$ which decompose $\frac12(I-e)$ (the projection onto the spectral subspace of $e$ corresponding to the eigenvalue $-1$): 
$$
p_++p_-=\frac12(I-e).
$$
Conversely, any decomposition as this one provides an element $z$ such that $e^{2z}\e_{G_I}=\e_{G_e}$. This proves that there are infinitely many minimal geodesics joining $G_I$ and $G_e$ in $\Lambda_J(\h)$. Note, accordingly, that 
$\|p_{G_I}-p_{G_e}\|=1$, because $I-e$ is non invertible in $\k$.

\end{enumerate}

\end{exmp}
\section{Unbounded self-adjoint Fredholm operators}
In this section we examine the set of subspaces in $\Lambda_J(\h)$ which are graphs of (unbounded) Fredholm operators. Recall that a self-adjoint operator $a:D(a)\subset \k \to \k$ is called Fredholm if the bounded operator $a(I+a^2)^{-1/2}$ is Fredholm. 

Following the notation in \cite{furutani}, if $L_0\in \Lambda_J(\h)$, denote by $\oo_{L_0}$ the set of Lagrangian subspaces which are transversal to $L_0$,
$$
\oo_{L_0}=\{S\in\Lambda_J(\h): S+L_0=\h\}.
$$
In \cite{furutani} it is proved that these sets are open in $\Lambda_J(\h)$.
As in the examples of the preceeding section, suppose $\h=\k\times \k$. 
Let us denote by $\Lambda_g(\h)$  the sets of Lagrangian subspaces which are graphs (of necessarily self-adjoint operators) acting in $\k$, and by $\Lambda_{gf}(\h)$ the subset of the latter consisting of graphs of (unbounded) Fredholm operators.

\begin{prop}\label{abierto}
$\Lambda_g(\h)$ and  $\Lambda_{gf}(\h)$ are open in $\Lambda_J(\h)$.
\end{prop}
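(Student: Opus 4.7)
The plan is to identify $\Lambda_g(\h)$ with $\oo_{L_0}$ (for $L_0=\{0\}\times\k$), which is open in $\Lambda_J(\h)$ by \cite{furutani}, and then to deduce the openness of $\Lambda_{gf}(\h)$ via the classical openness of the Fredholm condition.

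The key step for $\Lambda_g(\h)$ is the characterization: a Lagrangian $L\subset\h$ is the graph $G_a$ of a (bounded) self-adjoint operator if and only if $L+L_0=\h$. The forward direction is immediate from the decomposition $(\xi,\eta)=(\xi,a\xi)+(0,\eta-a\xi)$. For the converse, if a Lagrangian $L$ satisfies $L+L_0=\h$, then using $J(L)=L^\perp$ and $J(L_0)=L_0^\perp$ one has $L\cap L_0=J^{-1}(L^\perp\cap L_0^\perp)=J^{-1}((L+L_0)^\perp)=\{0\}$, so $\h=L\oplus L_0$ is a topological direct sum. The projection $L\to L_0^\perp\simeq\k$ is then a continuous bijection, hence an isomorphism by the open mapping theorem, realizing $L$ as the graph of a bounded linear operator; the Lagrangian condition forces self-adjointness. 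Thus the bounded-graph part of $\Lambda_g(\h)$ coincides with $\oo_{L_0}$, open by \cite{furutani}. To include unbounded self-adjoint $a$, I would pass to the Riesz transform $R_a=a(I+a^2)^{-1/2}$, with $p_{G_a}$ depending continuously on $R_a$ via the block formula with entries $I-R_a^2$, $R_a(I-R_a^2)^{1/2}$, $R_a^2$.

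For $\Lambda_{gf}(\h)$: under the homeomorphism $\oo_{L_0}\cong\b_s(\k)$ given by $G_a\leftrightarrow a$ (bicontinuity from the explicit matrix formula for $p_{G_a}$ in $a$), $\Lambda_{gf}(\h)$ corresponds to the set of bounded self-adjoint Fredholm operators. Classical Fredholm theory gives that the set of Fredholm bounded operators is open in $\b(\k)$, so also in $\b_s(\k)$; therefore $\Lambda_{gf}(\h)$ is open in $\oo_{L_0}$ and hence in $\Lambda_J(\h)$.

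The main obstacle is extending the identification $\oo_{L_0}\cong\b_s(\k)$ to unbounded self-adjoint operators via the Riesz transform. The continuity of $p_{G_a}$ in $R_a$ is immediate from the block formula, but the inverse direction---extracting $R_a$ from $p_{G_a}$---involves the potentially unbounded $(I-R_a^2)^{-1/2}$ at the boundary $\|R_a\|=1$ and requires a careful spectral argument.
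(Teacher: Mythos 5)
Your argument for the bounded part is correct and is essentially the paper's: the identity $L^\perp\cap L_0^\perp=(L+L_0)^\perp$ together with $J(L)=L^\perp$, $J(L_0)=L_0^\perp$ shows that $L+L_0=\h$ forces $L\cap L_0=\{0\}$, the open mapping theorem then realizes $L$ as the graph of a bounded self-adjoint operator, and Furutani's openness of $\oo_{L_0}$ does the rest (the paper phrases this through $L\mapsto L^\perp$ and $\oo_{\k\times\{0\}}$, which is the same statement). But the gap you flag at the end is genuine and, more importantly, it is not a technicality about inverting the Riesz transform: the set of \emph{all} graphs, unbounded ones included, is simply not open in $\Lambda_J(\h)$ for the gap (norm of projections) topology. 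Take $\k=\ell^2$, let $a$ be the self-adjoint operator $ae_n=ne_n$, and let $L_N$ be the Lagrangian obtained from $G_a$ by replacing, inside the $J$-invariant block $\mathbb{R}e_N\times\mathbb{R}e_N$, the line spanned by $(e_N,Ne_N)$ with the line spanned by $(0,e_N)$. Then $\|p_{L_N}-p_{G_a}\|=(1+N^2)^{-1/2}\to 0$ while $L_N\cap(\{0\}\times\k)\ne\{0\}$, so $\Lambda_g(\h)$ contains no neighborhood of $G_a$; since this $a$ is Fredholm, the same example applies to $\Lambda_{gf}(\h)$. What is true, and what both you and the paper actually establish, is openness of the set of graphs of \emph{bounded} self-adjoint operators: being a graph is the condition $L\cap(\{0\}\times\k)=\{0\}$, whereas the transversality condition $L^\perp+(\k\times\{0\})=\h$ on which the paper's proof rests (equivalently your $L+L_0=\h$) is strictly stronger, since $G_a^\perp+(\k\times\{0\})=\k\times D(a)$. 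So your inability to close the unbounded case is not a defect of your route; the statement requires boundedness (or a different topology) to hold.

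Concerning $\Lambda_{gf}(\h)$, your reduction to the classical openness of bounded Fredholm operators under $\oo_{L_0}\cong\b_s(\k)$ works on the bounded part, but the paper's device is worth adopting because it is more robust: instead of recovering $a$ (or its Riesz transform) from $p_{G_a}$, it reads off the $(2,2)$-entry $a^2(I+a^2)^{-1}$ of $p_{G_a}$ and shows that $a$ is Fredholm if and only if this \emph{bounded} entry is a Fredholm operator, using $a^2(I+a^2)^{-1}=\bigl(a(I+a^2)^{-1/2}\bigr)^2$ in one direction and square roots plus the polar decomposition $a=u|a|$ in the other. Since the $(2,2)$-entry is a norm-continuous function of $p_{G_a}$, this yields openness of $\Lambda_{gf}(\h)$ \emph{relative to} $\Lambda_g(\h)$ even for unbounded operators, and entirely avoids the inverse spectral argument you were worried about.
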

\begin{proof}
The first fact is elementary. A subspace $S\in\Lambda_J(\h)$ is a graph of an operator $a:D(a)\subset\k\to \k$ if and only if $L\cap (\{0\}\times \k)=\{0\}$, or equivalently, $L^\perp +(\k\times\{0\})=\h$. Therefore 
$$
\Lambda_g(\h)=\{L\in\Lambda_J(\h): L^\perp + (\k\times\{0\})=\h\}=\oo_{(\k\times\{0\})}^\perp,
$$
which  is open because the map $\Lambda_J(\h)\to \Lambda_J(\h)$, $S\mapsto S^\perp$ is a homeomorphism. Indeed, in terms of projections, it is $p_S\mapsto I-p_S$.
Suppose that $L=G_a$ is the graph of $a$. It is known (and it is not difficult to see that) the matrix of the projection $p_{G_a}$ is given by
$$
p_{G_a}=\left( \begin{array}{cc} (I+a^2)^{-1} & a(I+a^2)^{-1} \\ a(I+a^2)^{-1} & a^2(I+a^2)^{-1} \end{array} \right).
$$
Note that $a$ is Fredholm if and only if the $2,2$-entry in this matrix is a Fredholm operator. Indeed, if $a(I+a^2)^{-1/2}$ is Fredholm, then so is 
$$
a^2(I+a^2)^{-1}=(a(I+a^2)^{-1/2})^2.
$$
On the other hand, if the $2,2$ entry $a^2(I+a^2)^{-1}$ is Fredholm, then  its square root $|a|(I+a^2)^{-1/2}$ is also Fredholm. In the polar decomposition $a=u|a|$, $u$ is can be chosen unitary, and therefore
$a(I+a^2)^{-1/2}=u^*|a|(I+a^2)^{-1}$ is Fredholm. Since the set of Fredholm operators of $\k$ is open, it follows that the set 
$$
\Lambda_{gf}(\h)=\{G_a\in\Lambda_g(\h): \hbox{ the } 2,2 \hbox{ entry of } p_{G_a} \hbox{ is Fredholm}\}
$$
is open in $\Lambda_g(\h)$.

\end{proof}

\begin{rem}\label{proyes}
In particular, $\Lambda_g(\h)$ and $\Lambda_{gf}(\h)$ are submanifolds of $\Lambda_J(\h)$. Consider the base point $G_I\in \Lambda_{gf}(\h)$, the graph of the identity operator of $\k$. Since $\Lambda_{gf}(\h)$ is open, any geodesic starting at $G_I$ will remain inside of $\Lambda_{gf}(\h)$ for a certain period of time. The next result examines how long is this period. First, recall the form of a (minimal) geodesic $\delta(t)=e^{tz}(G_I)$ starting at $G_I$.
Let us regard $\delta$ as a curve of projections. Then if 
$$
z=\left( \begin{array}{cc} 0 & y \\ -y & \; 0 \end{array} \right)\ \  \hbox{ then } \ \ 
e^{tz}= \left( \begin{array}{cc}  \cos(ty)& \sin(ty) \\ -\sin(ty) & \cos(ty) \end{array} \right),
$$
and
$$
p_{\delta(t)}=\frac12 \left( \begin{array}{cc}  \sin(2ty)+I & \cos (2ty) \\ \cos(2ty) & I-\sin(2ty) \end{array} \right)
$$
\end{rem}

\begin{rem}
Given a self-adjoint operator $a:D(a)\subset\k\to \k$, one can characterize the unitaries $u\in U(\h_J)$ such that $u(G_a)$ is a graph. First note that since $u$ is a unitary that commutes with $J$, it must be of the form
$$
u=\left(\begin{array}{cc} x & y \\ -y & x \end{array} \right) \hbox{ with } \left\{ \begin{array}{l}
x^*x +y^*y= xx^*+yy^*=I \\ x^*y=y^*x \end{array} \right. .
$$
Then $u(G_a)$ is a graph if and only if $x+ya$ is one to one. Indeed, if $x+ya$ is one to one, then
$(0,\eta)\in u(G_a)$ implies that there exists $\xi \in D(a)$ such that 
$$
(0,\eta)=u(\xi,a\xi)=((x+ya)\xi, (-y+xa)\xi).
$$
Since $x+ya$ is one to one, this implies that $\xi=0$ and thus $\eta=0$. Conversely, suppose that there exists $\xi_0\ne 0$ such that $(x+ya)\xi_0=0$, then clearly $\eta_0=(-y+xa)\xi_0$ satisfies that $(0,\eta_0)\in u(G_a)$. Note that $\eta_0\ne 0$: $(x+ya)\xi_0=0$ implies that 
$$
0=(x^*x+x^*ya)\xi_0=(I-y^*y+y^*xa)\xi_0,
$$
and thus $\xi_0=y^*(-y+xa)\xi_0=y^*\eta_0$. 

The self-adjoint operator whose graph is given by $u(G_a)$ is apparently
$$
(-y+ax)(x+ay)^{-1}.
$$

Accordingly, the unitaries $u\in U(\h_J)$ such that $u(G_a)$ is the graph of a Fredholm operator are the ones such that  the $2,2$-entry of $up_{G_a}u^*$ is a Fredholm operator. Namely, 
$$
(-y+ax)(I+a^2)^{-1}(-y^*+x^*a)
$$
is Fredholm. Note that it is bounded, therefore this is equivalent to 
$$
(-y+ax)(I+a^2)^{-1/2}
$$
being a Fredholm operator. In particular, if $a$ is bounded this is equivalent to $-y+ax$ being a Fredholm operator.
\end{rem}
Using the above remark, we may characterize the portion of a geodesic starting at $G_I$ that remains inside $\Lambda_{gf}$. If $x$ is a bounded operator, denote by $\sigma_p(x)$ the point spectrum of $x$ and by $\sigma_e(x)$ the essential spectrum of $x$.
\begin{lem}
let $y=y^*$ in $\k$ with $\|y\|\le \pi/2$. Then the geodesic  of $\Lambda_J(\h)$, $\delta(t)=e^{tz}(G_I)$,
where
$$
z=\left(\begin{array}{cc} 0 & y \\ -y & 0 \end{array} \right),
$$
remains inside $\Lambda_{gf}(\h)$ for all $t\in[0,1]$ if and only if
$$
\sigma_p(y)\subset (-\pi/4, \pi/2] \hbox{ and } \sigma_e(y)\subset [-\pi/2, \pi/4).
$$
\end{lem}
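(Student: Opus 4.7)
The plan is to translate the condition $\delta(t)\in\Lambda_{gf}(\h)$ into spectral conditions on $y$ via functional calculus, then quantify these over $t\in[0,1]$. From Remark \ref{proyes} the projection onto $\delta(t)$ has $(2,2)$-entry $\tfrac{1}{2}(I-\sin(2ty))$; by Proposition \ref{abierto}, the unbounded self-adjoint operator whose graph is $\delta(t)$ is Fredholm if and only if this entry is a Fredholm operator, i.e.\ if and only if $1\notin\sigma_e(\sin(2ty))$. Moreover, the remark preceding the lemma, applied with $a=I$, tells us that $\delta(t)=e^{tz}(G_I)$ is a graph at all if and only if $\cos(ty)+\sin(ty)=\sqrt{2}\sin(ty+\pi/4)$ is injective. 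I therefore need to characterise when both conditions hold simultaneously for every $t\in[0,1]$.

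For the Fredholm side I would invoke the spectral mapping theorem for the essential spectrum of bounded self-adjoint operators under continuous functions, which gives $\sigma_e(\sin(2ty))=\sin(2t\,\sigma_e(y))$. Hence the Fredholm condition fails at some $t\in[0,1]$ if and only if there exist $\lambda\in\sigma_e(y)$ and $t\in[0,1]$ with $2t\lambda\in\pi/2+2\pi\mathbb{Z}$. Since $\|y\|\le\pi/2$ confines $t\lambda$ to $[-\pi/2,\pi/2]$, only the value $t\lambda=\pi/4$ is possible, and this has a solution $t\in(0,1]$ precisely when $\lambda\ge\pi/4$. Thus Fredholmness throughout the geodesic is equivalent to $\sigma_e(y)\subset[-\pi/2,\pi/4)$.

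For the graph side, injectivity of the bounded self-adjoint operator $\sin(ty+\pi/4)$ amounts, via the functional calculus, to the spectral measure of $y$ assigning zero mass to $\{s\in\sigma(y):\sin(ts+\pi/4)=0\}$. For $t\in(0,1]$ this zero set inside $[-\pi/2,\pi/2]$ reduces to the single point $s=-\pi/(4t)$, which lies in $[-\pi/2,\pi/2]$ only when $t\ge 1/2$. Since the spectral measure charges a single point exactly when that point is an eigenvalue of $y$, injectivity fails at some $t$ if and only if some $\lambda\in\sigma_p(y)$ has the form $-\pi/(4t)$ with $t\in[1/2,1]$; as $t$ sweeps this interval, such $\lambda$ sweep $[-\pi/2,-\pi/4]$. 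Hence the graph condition holds throughout $[0,1]$ if and only if $\sigma_p(y)\cap[-\pi/2,-\pi/4]=\emptyset$, i.e.\ $\sigma_p(y)\subset(-\pi/4,\pi/2]$.

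The principal subtlety will be the clean separation of point-spectrum versus essential-spectrum reasoning: injectivity concerns whether the spectral measure of $y$ puts mass on a discrete set (a point-spectrum matter), whereas Fredholmness of a bounded self-adjoint operator at $0$ is purely an essential-spectrum condition. This asymmetry is precisely what forces the endpoints $-\pi/4$ and $\pi/4$ to be excluded on opposite sides of the two spectral conditions.
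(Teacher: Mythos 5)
Your proposal is correct and follows essentially the same route as the paper: reduce membership in $\Lambda_{gf}(\h)$ to injectivity of $\cos(ty)+\sin(ty)$ (graph condition, a point-spectrum matter) and Fredholmness of $I-\sin(2ty)$ (an essential-spectrum matter), then unwind these over $t\in[0,1]$ using $\|y\|\le\pi/2$. You merely supply more detail than the paper does (the spectral mapping theorem for $\sigma_e$, the explicit range $t\ge 1/2$ for the zero of $\cos(ts)+\sin(ts)$), which is a welcome elaboration of the paper's rather terse ``apparently''.
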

\begin{proof}
By the above remark and the computations done in  Proposition \ref{abierto}, $\delta(t)$ is the graph of a self-adjoint operator for $t\in[0,1]$ if and only if $\cos(ty)+\sin(ty)$ is injective. In other words, $-\pi/4$ does not belong to $\sigma_p(ty)$ for any such $t$. Apparently, since $\|y\|\le \pi/2$, this is equivalent to $\sigma_p(y)\subset (-\pi/4, \pi/2]$. On the other hand, in order that $\delta(t)$ be the graph of a Fredholm operator, the operator $I-\sin(2ty)$ must be Fredholm. That is, $\pi/2$ should not belong to $\sigma_e(2ty)$ for any $t\in[0,1]$. This is equivalent to $\sigma_e(y)\subset [-\pi/2, \pi/4)$.
\end{proof}

\begin{cor}
Let $\delta(t)=e^{tz} (G_I)$ be a geodesic of the connection, which is minimal, i.e. $\|z\|\le \pi/2$. Then $\delta(t)\in\Lambda_{gf}(\h)$ for $0\le t <\frac{\pi}{4\|z\|}$. 
\end{cor}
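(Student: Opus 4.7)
The plan is to deduce the corollary directly from the preceding lemma by a rescaling argument. Given a minimal geodesic $\delta(t)=e^{tz}(G_I)$ with $\|z\|\le \pi/2$, write $z=\left(\begin{array}{cc} 0 & y \\ -y & 0\end{array}\right)$ with $y=y^*\in \b(\k)$, and note that $\|y\|=\|z\|$. Fix any $t_0\in [0,\pi/(4\|z\|))$; I want to show $\delta(t_0)\in \Lambda_{gf}(\h)$, and since $t_0$ is arbitrary this gives the full range.

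The key observation is that the curve $\tilde\delta(s)=e^{s(t_0 z)}(G_I)$, $s\in[0,1]$, is a reparametrization of $\delta$ on $[0,t_0]$. Setting $\tilde y = t_0 y$, the ``velocity'' of $\tilde\delta$ has off-diagonal block $\tilde y$ with
$$
\|\tilde y\|=t_0\|y\|=t_0\|z\|<\pi/4\le \pi/2,
$$
so $\tilde\delta$ is again a minimal geodesic in the sense of the lemma. Therefore the hypothesis $\|\tilde y\|\le\pi/2$ of the lemma is satisfied, and one may apply it to $\tilde\delta$.

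Now the spectrum of the self-adjoint operator $\tilde y$ satisfies $\sigma(\tilde y)\subset [-\|\tilde y\|,\|\tilde y\|]\subset (-\pi/4,\pi/4)$. Since both the point spectrum and essential spectrum are contained in $\sigma(\tilde y)$, we obtain
$$
\sigma_p(\tilde y)\subset (-\pi/4,\pi/4)\subseteq (-\pi/4,\pi/2]\quad\text{and}\quad\sigma_e(\tilde y)\subset (-\pi/4,\pi/4)\subseteq [-\pi/2,\pi/4).
$$
By the lemma, $\tilde\delta(s)\in \Lambda_{gf}(\h)$ for every $s\in[0,1]$; in particular $\delta(t_0)=\tilde\delta(1)\in \Lambda_{gf}(\h)$, which is what we wanted.

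There is no real obstacle here: the previous lemma does all the heavy lifting, and the only content of the corollary is the rescaling that converts the range $[0,\pi/(4\|z\|))$ of the unit-speed statement into the range $[0,1]$ covered by the lemma. If anything, the point worth making carefully is that strict inequality $t_0\|z\|<\pi/4$ is precisely what pushes $\sigma(\tilde y)$ strictly inside the open interval $(-\pi/4,\pi/4)$, so that the forbidden values $-\pi/4$ (for $\sigma_p$) and $\pi/4$ (for $\sigma_e$) are automatically avoided regardless of any finer spectral behavior of $y$.
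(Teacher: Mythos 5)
Your argument is correct and is exactly the intended deduction: the paper states this corollary without proof because, as you observe, $t\|z\|<\pi/4$ forces $\sigma(ty)\subset(-\pi/4,\pi/4)$, which puts both $\sigma_p$ and $\sigma_e$ inside the intervals required by the lemma. The rescaling to the interval $[0,1]$ is a harmless repackaging of the same spectral-radius observation.
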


Let $\fred_s$ denote the set of (possibly unbounded) self-adjoint Fredholm operators of $\k$. It is a metric space with the so called gap metric, which is the metric given by  $d(f_1,f_2)=\|p_{G_{f_1}}-p_{G_{f_2}}\|$.
If $\alpha:[0,1]\to \fred_s$ is a continuous map in the gap topology, there is a topological invariant called the {\it spectral flow} associated to $\alpha$ \cite{phillips}. Let us compute this invariant for the geodesics in $\Lambda_J(\h)$ which remain inside $\Lambda_{gf}(\h)$.  Let $C$ denote the Cayley transform,
$$
C(f)=(f-iI)(f+iI)^{-1}.
$$
The spectral flow  of $\alpha$ is defined as the winding number of the curve $w(C(f))$. We shall see that the spectral flow of certain geodesics of $\Lambda_J(\h)$, starting at $G_I$, that remain inside $\Lambda_{gf}(\h)$ is trivial.
\begin{rem}
Let $\delta$ as above. Suppose that $\delta(t)\in \Lambda_{gf}(\h)$ for $t\in [0,1]$, and denote by $f(t)$ the Fredholm operator in $\k$ whose graph is $\delta(t)$. Then $-1\notin \sigma_e(C(f(t)))$ for $t\in [0,1]$.
Indeed, by the above remark and computations,  the operator $f(t)$ is given by
$$
f(t)=(-\sin(ty)+\cos(ty))(\sin(ty)+\cos(ty))^{-1}.
$$
Let us compute $C(f(t))$. By a functional calculus argument, $C(f(t))=g(ty)$, where
$$
g(x)=\left(\frac{-\sin(x)+\cos(x)}{\sin(x)+\cos(x)}-i\right)\left(\frac{-\sin(x)+\cos(x)}{\sin(x)+\cos(x)}+i\right)^{-1}.
$$
After elementary computations, one sees that $g(x)=-\sin(2x)-i\cos(2x)$. Thus
$$
C(f(t))=-i(\sin(-2ty)+\cos(-2ty))=e^{-i(\pi/2+2ty)}.
$$
By the above lemma, since $\delta(t)(G_I)$ is the graph of a Fredholm operator, $\pi/4\notin \sigma_e(ty)$ for all $t\in[0,1]$. 
Then  
$$
-\pi\notin \sigma_e(-\pi/2-2ty), 
$$
and the claim follows.
\end{rem}
We shall consider now geodesics $\delta$ with $\delta(0)=I$ which remain symmetrically inside $\Lambda_{gf}(\h)$, i.e. $\delta(t)\in \Lambda_{gf}(\h)$ for all $t$ in a symmetric interval centered at $t=0$.
\begin{prop}
With the above notations, suppose that $\delta(t)$ is a minimal geodesic which remains inside $\Lambda_{gf}(\h)$ for all $t\in[-1,1]$.
Then $-1\notin \sigma(C(f(t))$ for all $t\in[-1,1]$. In particular the winding number of $\delta(t)$, $t\in[-1,1]$, is trivial.
\end{prop}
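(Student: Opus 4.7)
The approach combines the spectral constraint forced by the preceding lemma, applied in both directions of time, with the explicit formula $C(f(t)) = e^{-i(\pi/2 + 2ty)}$ from the preceding remark.

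First I would apply the lemma to $\delta|_{[0,1]}$, obtaining $\sigma_p(y) \subset (-\pi/4, \pi/2]$ and $\sigma_e(y) \subset [-\pi/2, \pi/4)$. To exploit the hypothesis on $[-1,0]$, observe that the reparametrization $\hat\delta(s) = \delta(-s)$ is again a minimal geodesic starting at $G_I$, now with anti-symmetric velocity whose upper-right block is $-y$. Applying the lemma to $\hat\delta|_{[0,1]}$ and using $\sigma_p(-y) = -\sigma_p(y)$, $\sigma_e(-y) = -\sigma_e(y)$, one obtains the reflected conditions $\sigma_p(y) \subset [-\pi/2, \pi/4)$ and $\sigma_e(y) \subset (-\pi/4, \pi/2]$. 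Intersecting with the forward conditions yields
\[
\sigma_p(y) \cup \sigma_e(y) \subset (-\pi/4, \pi/4).
\]
Since $y$ is bounded self-adjoint, every point of $\sigma(y) \setminus \sigma_e(y)$ is an isolated eigenvalue of finite multiplicity and therefore belongs to $\sigma_p(y)$. Hence $\sigma(y) \subset (-\pi/4, \pi/4)$.

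By the spectral mapping theorem applied to the continuous function $\lambda \mapsto e^{-i(\pi/2 + 2t\lambda)}$,
\[
\sigma\bigl(C(f(t))\bigr) = \bigl\{e^{-i(\pi/2 + 2t\lambda)} : \lambda \in \sigma(y)\bigr\}.
\]
For any $t \in [-1,1]$ and $\lambda \in \sigma(y)$, the bound $|t\lambda| < \pi/4$ forces $\pi/2 + 2t\lambda \in (0, \pi)$, so $e^{-i(\pi/2 + 2t\lambda)}$ lies strictly in the open lower half of the unit circle and in particular cannot equal $-1$. This proves the first assertion.

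For the winding number statement, since $-1$ is uniformly avoided by $\sigma(C(f(t)))$ for $t \in [-1,1]$, the branch of $\log$ cut along the negative real axis is well-defined on each $C(f(t))$ via functional calculus, and the homotopy $(s,t) \mapsto \exp\bigl((1-s)\log C(f(t))\bigr)$ contracts the path to the identity inside the open set of unitaries whose spectrum omits $-1$. The spectral flow of $f(t)$, equivalently the winding number of $C(f(t))$ around $-1$, is therefore zero. The only mildly delicate point I expect is the symmetry step: one must recognize that the backward portion of the geodesic is itself a forward geodesic with velocity $-z$ to which the preceding lemma applies verbatim, rather than trying to extend that lemma by hand to two-sided intervals.
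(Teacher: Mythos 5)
Your proof is correct and follows essentially the same route as the paper: apply the preceding lemma to both the forward geodesic and its time-reversal (velocity $-z$), combine the point- and essential-spectrum constraints via $\sigma(y)=\sigma_p(y)\cup\sigma_e(y)$ to get $\sigma(y)\subset(-\pi/4,\pi/4)$, and then invoke the contractibility of $\{u\in U(\k):-1\notin\sigma(u)\}$. The only cosmetic difference is that you intersect all four spectral conditions, while the paper gets by with the essential-spectrum condition forward and the point-spectrum condition backward; the conclusion is identical.
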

\begin{proof}
As in the remark above $\pi/4\notin \sigma_e(ty)$. By a similar argument, reasoning with $-ty$, it follows that $-\pi/4\notin \sigma_p(-ty)$. Using the fact that for a bounded self-adjoint operator $a$, one has $\sigma(a)=\sigma_e(a)\cup \sigma_p(a)$, it follows that $\pi/4 \notin \sigma(ty)$ for all $t\in[-1,1]$. 
Therefore, $-1\notin \sigma(C(f(t))$ for all $t\in[-1,1]$. The set $\{u\in U(\k): -1\notin \sigma(u)\}$ is contractible (it is homeomorphic to the ball $\{x\in \b(\k): x^*=x \hbox{ and } \|x\|<\pi\}$ via the exponential map $x\mapsto e^{ix}$). Therefore the assertion on the triviality of the winding number follows.
\end{proof}

\section{Orbits of the Fredholm group}

In this section we shall consider a fixed Lagrangian subspace $L_0\subset \h$ and consider its orbit under the action of the complex Fredholm group
$$
U_c(\h_J)=\{u \in U(\h_J): u-I \hbox{ is compact}\}.
$$
Denote by ${\cal C}(\h)$ the ideal of compact operators in $\h$ (and accordingly, ${\cal C}(\h_J)$ the closed and complemented subspace of complex compact operators).
In \cite{furutani} K. Furutani introduced the notion of Fredholm pairs of Lagrangian subspaces. A pair  $(S_1,S_2)$ of elements in $\Lambda_J(\h)$ is a Fredholm pair if $S_1\cap S_2$ and $S_1^\perp \cap S_2^\perp$ are finite dimensional. Equivalently, $p_{S_1^\perp}|_{S_2}:S_2\to S_1^\perp$ is a Fredholm operator. He showed that if there exists $u\in U_c(\h_J)$ such that $u(L_0^\perp)=S$, then $(L_0^\perp,S)$ is a Fredholm pair. He also showed that the set of all $S$ such that $(L_0^\perp,S)$ is a Fredholm pair is an open subset of $\Lambda_J(\h)$ (containing $L_0$). In particular this implies, as we shall see below, that not every such $S$ is of the form $u(L_0^\perp)$ for some $u\in U_c(\h_J)$.

On the other hand, if one considers the {\it real } Fredholm group (i.e. operators $w$ in $O(\h)$ such that $w-I$ is compact), it is known \cite{pressleysegal,segalwilson,voiculescu} that the set of all such $w(L_0)$ form a differentiable manifold, known as the restricted or Sato Grassmannian $G_{res}^0(L_0)$ \cite{sato} (or more precisely, the connected component of $L_0$ in the restricted Grassmannian). It consists of all real subspaces $T\subset \h$ (Lagrangian or not) such that
\begin{enumerate}
\item
$p_{L_0}|_{T}:T\to L_0 \hbox{ is Fredholm (of index) } 0,$
and 
\item
$p_{L_0}|_{T}:T\to L_0 \hbox{ is compact}.$
\end{enumerate}

Note that the second condition is the difference between both notions.

In this section we shall consider the orbit
$$
\uu^{c,J}_{L_0}=\{u(L_0): u\in U_c(\h_J)\}.
$$
Note that all orbits are diffeomorphic: if $L\in \Lambda_J(\h)$, there exists $w\in U(\h_J)$ such that $w(L_0)=L$ and therefore
\begin{equation}\label{sondifeomorfas}
{\cal U}_L^{c,J}=\{uw(L_0): u\in U_c(\h_J)\}=w (\uuc),
\end{equation}
because $w^*uw-I=w^*(u-I)w$ is compact.

By the above remarks, clearly one has
$$
\uu^{c,J}_{L_0}=\Lambda_J(\h)\cap G_{res}^0(L_0).
$$
As before, we shall present subspaces $S$ alternatively as projections $p_S$ or symmetries $\e_S$.
If $A\subset\b(\h)$, we shall denote by $A^J$ the set of operators in $A$ which anti-commute with $J$.
Note first that 
$$
\uu^{c,J}_{L_0}\subset \e_{L_0}\  +  \ C(\h)^J.
$$
Indeed, if $u\in U_c(\h_J)$, then 
$$
u\e_{L_0} u^*=\e_{L_0}+(u-I)\e_{L_0}+\e_{L_0}(u^*-I)+(u-I)\e_{L_0}(u^*-I).
$$
Clearly $(u-I)\e_{L_0}+\e_{L_0}(u^*-I)+(u-I)\e_{L_0}(u^*-I)$ is compact, and our claim follows because $u, u^*$ commute with $J$ and $\e_{L_0}$ anti-commutes with $J$.

We shall need the following result, which is a straightforward consequence of the inverse function theorem in Banach spaces. A proof can be found in the appendix of \cite{rae}.

\begin{lem}
Let $G$ be a Banach-Lie group acting smoothly on a Banach space $X$. For a fixed
$x_0\in X$, denote by $\pi_{x_0}:G\to X$ the smooth map $\pi_{x_0}(g)=g\cdot
x_0$. Suppose that
\begin{enumerate}
\item
$\pi_{x_0}$ is an open mapping, when regarded as a map from $G$ onto the orbit
$\{g\cdot x_0: g\in G\}$ of $x_0$ (with the relative topology of $X$).
\item
The differential $d(\pi_{x_0})_1:(TG)_1\to X$ splits: its kernel and range are
closed complemented subspaces.
\end{enumerate}
Then the orbit $\{g\cdot x_0: g\in G\}$ is a smooth submanifold of  $X$, and the
map
$\pi_{x_0}:G\to \{g\cdot x_0: g\in G\}$ is a smooth submersion.
\end{lem}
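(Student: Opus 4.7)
The plan is to use the inverse function theorem in Banach spaces, enabled by hypothesis (2), to build an explicit submanifold chart of $X$ at $x_0$ whose horizontal slice is the orbit locally; hypothesis (1) is what forces the orbit to agree with this slice near $x_0$. The local picture is then propagated to every point of the orbit by the $G$-action.

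First I would set $K:=\ker d(\pi_{x_0})_1$ and $R:=\hbox{range of } d(\pi_{x_0})_1$. By hypothesis (2), fix closed complements so that $(TG)_1=K\oplus K'$ and $X=R\oplus R'$; the restriction $\tau:=d(\pi_{x_0})_1|_{K'}:K'\to R$ is then a Banach isomorphism by the open mapping theorem. Using a smooth local chart $\exp$ of $G$ at $1$, define
$$
F:K'\oplus R'\supset\mathcal{V}\to X,\qquad F(v,r):=\exp(v)\cdot x_0+r.
$$
Its differential at $(0,0)$ is the Banach isomorphism $(v,r)\mapsto\tau(v)+r:K'\oplus R'\to R\oplus R'=X$, so the inverse function theorem yields neighborhoods $U_1\times U_2\subset K'\times R'$ of $(0,0)$ and $W\subset X$ of $x_0$ for which $F|_{U_1\times U_2}:U_1\times U_2\to W$ is a diffeomorphism. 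Writing $\mathcal{O}=\{g\cdot x_0:g\in G\}$ for the orbit, the slice $S:=F(U_1\times\{0\})$ is a smooth submanifold of $W$ contained in $\mathcal{O}$.

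The main obstacle, where hypothesis (1) is essential, is to obtain the reverse inclusion $\mathcal{O}\cap W\subset S$ after shrinking $W$. For this one uses a BCH-type product chart $(v_K,v_{K'})\mapsto\exp(v_K)\exp(v_{K'})$ of $G$ about $1$, so that a full open neighborhood of $1$ in $G$ has the form $N=\exp(V_K)\exp(U_1)$; by hypothesis (1), $\pi_{x_0}(N)$ is then open in $\mathcal{O}$, and after further shrinking $W$ one arranges $\mathcal{O}\cap W=\pi_{x_0}(N)$. Finally, a point $y=\exp(v_K)\cdot\psi(v_{K'})\in\pi_{x_0}(N)$, with $\psi(v):=\exp(v)\cdot x_0$, is shown to lie in $S$: the $R'$-component $p_{R'}\circ F^{-1}$ vanishes to first order along $K$ at $x_0$ (because $d(\pi_{x_0})_1$ kills $K$), and openness of $\pi_{x_0}$ is what propagates this infinitesimal information globally over $\pi_{x_0}(N)$. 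This coordinate analysis is the delicate part of the argument and is carried out in the appendix of \cite{rae}.

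Once $\mathcal{O}\cap W=S$ is established, $F^{-1}$ is a submanifold chart of $X$ flattening $\mathcal{O}$ at $x_0$. For arbitrary $h\in G$, the smooth diffeomorphism $h\cdot:X\to X$ (with inverse $h^{-1}\cdot$) carries $x_0$ to $h\cdot x_0$ and $\mathcal{O}$ to itself; composing $F$ with $h\cdot$ produces a submanifold chart at $h\cdot x_0$, and smoothness of the action ensures smooth compatibility of the transitions. Hence $\mathcal{O}$ is a smooth submanifold of $X$, with $T_{h\cdot x_0}\mathcal{O}=d(h\cdot)_{x_0}(R)$, a closed complemented subspace. Finally, the $G$-equivariance $\pi_{x_0}\circ L_h=(h\cdot)\circ\pi_{x_0}$ gives $d(\pi_{x_0})_h=d(h\cdot)_{x_0}\circ d(\pi_{x_0})_1\circ(dL_{h^{-1}})_h$, whose image equals $T_{h\cdot x_0}\mathcal{O}$ and whose kernel $(dL_h)_1(K)$ is split; therefore $\pi_{x_0}:G\to\mathcal{O}$ is a smooth submersion onto the orbit.
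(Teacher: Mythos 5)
You should first be aware that the paper does not actually prove this lemma: it states that the result ``is a straightforward consequence of the inverse function theorem in Banach spaces'' and refers the reader to the appendix of \cite{rae}. So there is no internal proof to compare against; your sketch follows exactly the route the paper points to (inverse function theorem applied to $F(v,r)=\exp(v)\cdot x_0+r$ built from the two splittings), and that setup is correct: $dF_{(0,0)}$ is a Banach isomorphism, $F$ is a local diffeomorphism, and the slice $S=F(U_1\times\{0\})$ is a submanifold of $X$ contained in the orbit.

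The genuine gap is the step you yourself flag as delicate, namely $\mathcal{O}\cap W\subseteq S$, and the mechanism you offer for it is not an argument. First, the observation that $p_{R'}\circ F^{-1}\circ\pi_{x_0}$ has vanishing differential at $1$ (because $d(\pi_{x_0})_1$ has range $R$) gives no control on a neighborhood: a smooth map whose differential vanishes at a point need not vanish nearby, and ``openness of $\pi_{x_0}$ propagates this infinitesimal information globally'' is not a valid inference --- openness is a statement about images of open sets, not about derivatives. What openness actually buys is only the preliminary reduction: every orbit point sufficiently close to $x_0$ is of the form $g\cdot x_0$ with $g$ in a prescribed neighborhood $N$ of $1$ (ruling out the pathology of distant group elements returning the orbit close to $x_0$). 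Second, after that reduction your BCH factorization $g=\exp(v_K)\exp(v_{K'})$ yields $g\cdot x_0=\exp(v_K)\cdot\psi(v_{K'})$, and this point is \emph{not} visibly in $S$: the subspace $K=\ker d(\pi_{x_0})_1$ is only the kernel of the differential, not a priori the Lie algebra of the isotropy group, so $\exp(v_K)$ need not fix $x_0$, let alone preserve the slice $S$ pointwise. Closing this step is precisely the content of Raeburn's appendix, on which both you and the paper ultimately rely; as written, your proposal reproduces the standard framework but does not supply the one substantive ingredient. Everything after that point (transporting the chart by $h\cdot$, the equivariance computation showing $\pi_{x_0}$ is a submersion at every $h$) is fine.
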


\begin{prop}\label{subvariedadcompactos}
The set $\uu^{c,J}_{L_0}$ is a complemented differentiable submanifold of $\e_{L_0}+{\cal C}(\h)^J$.  The map
$$
\pi_{L_0}:U_c(\h_J)\to \uu^{c,J}_{L_0} \ , \ \ \pi_{L_0}(u)=u(L_0), \;(\hbox{or equivalently } \pi_{L_0}(u)=u\e_{L_0}u^*)
$$ is a $C^\infty$ submersion.
\end{prop}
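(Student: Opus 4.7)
The plan is to verify the two hypotheses of the submanifold lemma stated just above, applied to the smooth action of $G:=U_c(\h_J)$ on the Banach space $X:=\e_{L_0}+\mathcal{C}(\h)^J$ at the base point $x_0:=\e_{L_0}$. Both assertions of the proposition then follow at once from the lemma.

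For hypothesis (1), openness of $\pi_{L_0}$ onto its image, I would build a smooth local cross section at $L_0$ via a Kato-type unitary. Given a Lagrangian $L$ with $p_L-p_{L_0}$ compact and $\|p_L-p_{L_0}\|<1$, set
$$
v:=p_Lp_{L_0}+(I-p_L)(I-p_{L_0}).
$$
A one-line rearrangement yields $v=I+(p_L-p_{L_0})\e_{L_0}$, hence $v-I\in\mathcal{C}(\h)$. By construction $vp_{L_0}=p_L v$, and the Lagrangian identity $Jp=(I-p)J$ applied to $p_L$ and to $p_{L_0}$ gives $Jv=vJ$ after a short computation. For $\|p_L-p_{L_0}\|$ small, $v^*v$ is an invertible compact perturbation of $I$ commuting with both $p_{L_0}$ and $J$, so $(v^*v)^{-1/2}$ belongs to $I+\mathcal{C}(\h)$ and also commutes with $p_{L_0}$ and $J$. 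Therefore $u:=v(v^*v)^{-1/2}$ lies in $U_c(\h_J)$, depends smoothly on $p_L$, and satisfies $u\cdot L_0=L$. This proves openness of $\pi_{L_0}$ at $I$, and the equivariance observation (\ref{sondifeomorfas}) propagates openness to every other point of the orbit.

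For hypothesis (2), the Lie algebra is $\mathfrak{u}_c(\h_J)=\{X\in\mathcal{C}(\h):X^*=-X,\ XJ=JX\}$, and differentiating $u\mapsto u\e_{L_0}u^*$ at $u=I$ gives $d(\pi_{L_0})_1(X)=[X,\e_{L_0}]$. Straightforward checks using $X^*=-X$, $XJ=JX$ and $\e_{L_0}J=-J\e_{L_0}$ show that every $[X,\e_{L_0}]$ is compact, self-adjoint and anti-commutes with both $\e_{L_0}$ and $J$; let $\mathcal{R}$ denote the closed subspace of $\mathcal{C}(\h)^J$ of all such operators. Conversely, given $y\in\mathcal{R}$, the assignment $X:=-\tfrac12\e_{L_0}y$ produces an element of $\mathfrak{u}_c(\h_J)$ with $[X,\e_{L_0}]=y$, using $\e_{L_0}y\e_{L_0}=-y$. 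Thus the range equals $\mathcal{R}$, while the kernel is the $\e_{L_0}$-commutant in $\mathfrak{u}_c(\h_J)$. Both are complemented by continuous projections: $X\mapsto\tfrac12(X+\e_{L_0}X\e_{L_0})$ projects $\mathfrak{u}_c(\h_J)$ onto the kernel, while the composition of $z\mapsto\tfrac12(z+z^*)$ with $z\mapsto\tfrac12(z-\e_{L_0}z\e_{L_0})$ projects $\mathcal{C}(\h)^J$ onto $\mathcal{R}$.

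The main delicate point is the construction of the local section: one must confirm that the Kato unitary $u$ actually lies in $U_c(\h_J)$. The compact reformulation $v=I+(p_L-p_{L_0})\e_{L_0}$ takes care of $u-I\in\mathcal{C}(\h)$, and it is precisely the Lagrangian condition on $L$ and $L_0$ that makes $v$ (and hence $u$) commute with $J$. Once these two points are settled, the differential computation and the splitting of its kernel and range reduce to routine manipulations with $\e_{L_0}$ and $J$, and the submanifold lemma delivers the complemented submanifold structure of $\uu^{c,J}_{L_0}$ in $\e_{L_0}+\mathcal{C}(\h)^J$ together with the submersion property of $\pi_{L_0}$.
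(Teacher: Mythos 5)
Your proof is correct and takes essentially the same route as the paper: the same submanifold lemma, the same differential $x\mapsto x\e_{L_0}-\e_{L_0}x$ with the same description of its kernel and range, and your cross-section operator $v=p_Lp_{L_0}+(I-p_L)(I-p_{L_0})$ is, after expanding, exactly the paper's $g=\frac12(I+\e_L\e_{L_0})$. The only cosmetic differences are in how you verify that $v$ commutes with $J$ and that the unitary part of the polar decomposition lies in $I+{\cal C}(\h)$.
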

\begin{proof}
We use the lemma above. The map $\pi_{L_0}$, regarded as a map from $U_c(\h_J)$ to $\e_{L_0}+{\cal C}(\h)^J$ is clearly $C^\infty$ (in fact $\e_{L_0}+{\cal C}(\h)^J$ is an affine Banach space).  Its differential at $I$ is given by
$$
\delta_{L_0}=d(\pi_{L_0})_I:{\cal C}(\h_J)_{ah}\to {\cal C}(\h)^J \ , \ \ \delta_{L_0}(x)=x\e_{L_0}-\e_{L_0}x.
$$
Here ${\cal C}(\h_J)_{ah}$ denotes the space of anti-hermitic operators in $\h_J$.
The kernel of $\delta_0$ consists of all complex compact anti-hermitic operators which commute with $\e_{L_0}$. An operator commutes with $\e_{L_0}$ if and only if it commutes with the projection $p_{L_0}$. Therefore it is a diagonal matrix in terms of this projection. Thus $\ker \delta_0$ is complemented, a supplement is furnished by the set of all anti-hermitic complex  operators which have co-diagonal matrix with respect to $p_{L_0}$ (or equivalently, anti-commute with $\e_{L_0}$). 

We claim that the range of $\delta_0$ consists of all elements  in ${\cal C}(\h)^J$ which are self-adjoint and anticommute with $\e_{L_0}$. This space is clearly complemented in the (complemented) space ${\cal C}(\h)^J_s$ of self-adjoint elements of ${\cal C}(\h)^J$. Let us prove our claim. If $y=\delta_0(x)=x\e_{L_0}-\e_{L_0}x$ for some $x\in{\cal C}(\h_J)_{ah}$, then it is clearly self-adjoint, compact and  anti-commutes with $J$. Let us prove that it anti-commutes  with $\e_{L_0}$: 
$$
\e_{L_0}y=\e_{L_0}x\e_{L_0}-x \ \hbox{ and } y\e_{L_0}=x- \e_{L_0}x\e_{L_0}.
$$
Conversely, suppose that $y\in {\cal C}(\h)^J_s$ anti-commutes with $\e_{L_0}$, $x=\frac12 y\e_{L_0}$. Then 
clearly $x$ is compact. It is complex, the product of two elements which anti-commute with $J$, commutes with $J$. It is anti-hermitic: $x^*=\e_{L_0}y=-y\e_{L_0}=-x$.  It anti-commutes with $\e_{L_0}$, being the product of $\e_{L_0}$ and $y$ which anti-commutes with $\e_{L_0}$. Finally, 
$$
\delta_0(x)=x\e_{L_0}-\e_{L_0}x=\frac12(y-\e_{L_0}y\e_{L_0})=y.
$$
It remains to prove that $\pi_{L_0}:U_c(\h_J)\to \uu^{c,J}_{L_0}$ is open. In order to prove this, we shall show that it has local continuous cross sections. It suffices to construct a local cross section on a neighborhood of $\e_{L_0}$.
This construction is adapted from \cite{cpr}. For a Lagrangian subspace $L$, consider the element
$$
g=\frac12(I+\e_L\e_{L_0}).
$$
It is invertible if $\e_L$ is close to $\e_{L_0}$ (in fact, it can be shown that it is invertible if $\|\e_L-\e_{L_0}\|<2$). If $L\in\uu^{c,J}_{L_0}\subset \e_{L_0}+{\cal C}(\h)^J$, then
$$
\e_L\e_{L_0}\in(\e_{L_0}+{\cal C}(\h)^J)\e_{L_0}=I+{\cal C}(\h)^J\e_{L_0}=I+{\cal C}(\h_J),
$$
where the last equality follows from the fact that the product of two operators which anti-commute with $J$, commutes with $J$. Thus $g$ is complex and invertible in a neighborhood of $\e_{L_0}$. Note that 
$$
g\e_{L_0}=\frac12(\e_{L_0}+\e_L)=\e_L g.
$$
Note also that $g^*g$ commutes with $\e_{L_0}$. It follows that $u_L=g(g^*g)^{-1/2}$, which is the unitary part in the polar decomposition of $g$, is a continuous map (in the parameter $L$) of complex unitary operators, which conjugates $\e_{L_0}$ and $\e_L$: $\e_L=u_L\e_{L_0}u_L^*$. Let us prove that it takes values in $U_c(\h_J)$. The polar decomposition of $g$ is performed in the $C^*$-algebra $\mathbb{C} I +{\cal C}(\h_J)$. Thus $u_L=\beta I+k$ with $k$ compact. Note that it must be $\beta=1$: indeed, since $g\in I+{\cal C}(\h_J)$, then also $g^*g\in I+{\cal C}(\h_J)$. Thus the spectrum of $g^*g$ only accumulates (eventually) at $1$. It follows that the same is true for $(g^*g)^{-1/2}$, and therefore $(g^*g)^{-1/2}\in I+{\cal C}(\h_J)$. Thus $u_L=g(g^*g)^{-1/2}\in I +{\cal C}(\h_J)$.
\end{proof}
Note that in particular, the tangent spaces of $\uu^{c,J}_{L_0}$ are:
\begin{equation}\label{tangentecompacto}
(T\uuc)_L=\{x\e_L-\e_L x: x\in {\cal C}(\h_J)_{ah}\}=(T\Lambda_J(\h))_L\cap {\cal C}(\h).
\end{equation}
The first equality is a consequence of the fact that $\pi_L$ is a submersion for any $L\in\uuc$. It is clear also that $(T\uuc)_L\subset (T\Lambda_J(\h))_L$ and that $x\e_L-\e_Lx$ is compact. Conversely, suppose that a tangent vector $y\in(T\in\Lambda_J(\h))_L$ is compact. Then, as in the proof of the above theorem, $y=x\e_L-\e_L x$ for $x=\frac12 y \e_L$, which is compact.
\begin{rem}
The space $\uu^{c,J}_{L_0}$ is a complemented submanifold of  $G_{res}^0(L_0)$ and a non-complemented submanifold of $\Lambda_J(\h)$.
Indeed, one must check that the inclusion $i_1:\uuc\hookrightarrow G_{res}(L_0)$ is a splitting immersion, and that $i_1:\uuc\hookrightarrow \Lambda_J(\h)$ is a non-splitting immersion. Note that
$$
(T\uuc)_{L}=\{y\in (TG_{res}^0(L_0))_{L}: yJ=-Jy\},
$$
which is complemented in $(TG_{res}^0(L_0))_{L}$, by the space of complex operators (i.e. operators that commute with $J$) in $(TG_{res}^0(L_0))_{L}$.
On the other hand, it is apparent that
$$
(T\uuc)_{L}=\{x\in (T\Lambda_J)_L: x \hbox{ is compact}\}
$$
is closed but not complemented in $(T\Lambda_J)_L$. To prove this, we may suppose $\h=\k\times \k$ and $L=L_0=G_I$ (recall that all orbits are diffeomorphic, the diffeomorphism is implemented by a linear complex unitary operator as in eq. (\ref{sondifeomorfas})). Then, by the computations in Section 3,
\begin{eqnarray}
(T\uuc)_{L_0} &=&\left\{\left( \begin{array}{cc} 0 & y \\ -y & 0 \end{array}\right):  y^*=-y, y \hbox{ compact }\right\}\nonumber\\
&\subset &(T\Lambda_J)_{L_0}=\left\{\left( \begin{array}{cc} 0 & y \\ -y & 0 \end{array}\right) :  y^*=-y\right\},\nonumber
\end{eqnarray}
which apparently is a closed but non split inclusion, and it is easy to see that $\uuc\subset \Lambda_J(\h)$ has the subspace topology since if $u=e^z$ is close to $1$, then $z=\log(u)=\sum \frac{(-1)^{n+1}}{n+1}(u-1)^{n+1}$ is a compact operator.
\end{rem}

The linear connection of $\Lambda_J(\h)$ restricts to $\uu^{c,J}_{L_0}$.
\begin{prop}\label{conexioncompactos}
Let $X$ and $Y$ be two tangent vector fields in $\uu^{c,J}_{L_0}$. Denote by $\nabla$ the linear connection in $\Lambda_J(\h)$. Then 
$$
\nabla_XY\in T \uu^{c,J}_{L_0}.
$$
\end{prop}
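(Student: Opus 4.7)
The plan is to reduce this to two facts about the formula $\Pi_\epsilon(a)=\frac12(a-\epsilon a\epsilon)$ from (\ref{proyeccion2}): first, that it preserves ``anti-commutation with $J$'' (already established in the lemma before the convexity proposition), and second, that it preserves compactness. Combining these with the characterization (\ref{tangentecompacto}),
$$
(T\uuc)_L=(T\Lambda_J(\h))_L\cap {\cal C}(\h),
$$
will finish the proof.

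More precisely, I would first fix a point $\e\in\uuc$ and a smooth curve $\gamma:(-\delta,\delta)\to\uuc$ with $\gamma(0)=\e$ and $\dot\gamma(0)=X(\e)$. Define the field $\tilde Y(t)=Y(\gamma(t))$ along $\gamma$. By the definition of the induced connection in $\Lambda_J(\h)$ (which is just the restriction of the one from $Gr(\h)$, via the lemma before the first proposition of Section 2),
$$
(\nabla_X Y)(\e)=\frac{D}{dt}\Big|_{t=0}\tilde Y(t)=\Pi_{\e}\big(\dot{\tilde Y}(0)\big).
$$
Thus it suffices to check that the right-hand side lies in $(T\uuc)_\e$.

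Next I would verify the two properties. Since $Y$ is a tangent vector field of $\uuc$, and $\uuc$ sits as a submanifold of the affine Banach space $\e_{L_0}+{\cal C}(\h)^J$ (Proposition \ref{subvariedadcompactos}), the curve $\tilde Y$ takes values in the closed subspace ${\cal C}(\h)\cap\b_s(\h)$ of compact symmetric operators. Consequently $\dot{\tilde Y}(0)\in{\cal C}(\h)$, and then by (\ref{proyeccion2}) the operator $\Pi_\e(\dot{\tilde Y}(0))=\frac12(\dot{\tilde Y}(0)-\e\dot{\tilde Y}(0)\e)$ is also compact. On the other hand, because $\tilde Y(t)\in(T\Lambda_J(\h))_{\gamma(t)}$ anti-commutes with $J$ for every $t$, so does its derivative $\dot{\tilde Y}(0)$; the lemma from Section 2 then gives that $\Pi_\e(\dot{\tilde Y}(0))$ anti-commutes with $J$. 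Combined with the fact that $\Pi_\e$ already lands in $(T\Lambda_J(\h))_\e$ (by the proposition on convexity/restriction of the connection), we conclude $\Pi_\e(\dot{\tilde Y}(0))\in (T\Lambda_J(\h))_\e\cap{\cal C}(\h)=(T\uuc)_\e$, which is exactly what we wanted.

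There is essentially no obstacle once one observes that tangent vectors of $\uuc$ are literally compact operators inside $\b(\h)$, so ``compactness is preserved under differentiation'' becomes automatic (the space of compact operators is closed). The only point that requires a moment of care is that the computation of $\nabla_X Y$ really can be done along a curve staying in $\uuc$, which is guaranteed by Proposition \ref{subvariedadcompactos} (the orbit is a genuine submanifold, so every tangent vector is realized by a curve inside the orbit).
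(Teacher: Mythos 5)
Your argument is correct and is essentially the paper's own proof: both reduce the claim, via the identification $(T\uuc)_L=(T\Lambda_J(\h))_L\cap {\cal C}(\h)$, to the observation that a tangent field of $\uuc$ is compact-valued, hence so is its derivative (compact operators form a closed subspace), and that $\Pi_\e(a)=\frac12(a-\e a\e)$ preserves compactness. The paper is merely terser, taking for granted that the covariant derivative computed in $\Lambda_J(\h)$ already lands in $T\Lambda_J(\h)$, which you spell out via the earlier lemma on anti-commutation with $J$.
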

\begin{proof}
Let $X(t)$ be a tangent field along $\e(t)$ in $\uuc$. We must show that the covariant derivative in $T\Lambda_J(\h)$,
$$
\frac{D}{dt}X=\Pi_\e(\dot{X}),
$$
takes values in $T\uuc$. By  formula (\ref{tangentecompacto}), it suffices to show that it takes compact values. Recall that $\Pi_\e(a)=\frac12(a-\e a\e)$. Then the proof follows, since the $\dot{X}(t)$ is compact because $X(t)$ is compact for all $t$.
\end{proof}
The argument of Theorem \ref{hopfrinowlagrange} adapts to this submanifold.
\begin{thm}\label{minimalidadcompacta}
Let $S_0,S_1\in  \uu^{c,J}_{L_0}$. Then there exists a geodesic of the linear connection of $\uu^{c,J}_{L_0}$ which joins them. The geodesic has minimal length (with respect to the Finsler metric induced by the operator norm). If $\|\e_{S_0}-\e_{S_1}\|<2$, the geodesic is unique. In any case, $\uu^{c,J}_{L_0}$ is geodesically convex, in the sense that if $S_0,S_1\in \uu^{c,J}_{L_0}$ and $\gamma$ is a geodesic of the connection joining them, then $\gamma\subset \uu^{c,J}_{L_0}$.
\end{thm}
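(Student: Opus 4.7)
The plan is to revisit the geodesic construction of Theorem \ref{hopfrinowlagrange} and check that, when $S_0,S_1\in\uuc$, the resulting velocity operator $z$ is compact. Granting this, the conjugating exponentials $e^{tz}$ belong to $U_c(\h_J)$, so the minimal geodesic $\delta(t)=e^{tz}\e_{S_0}e^{-tz}=e^{2tz}\e_{S_0}$ lies inside $\uuc$ for every $t$. Minimality in $\uuc$ then comes for free from Theorem 3.3: the Finsler length is measured with the same operator-norm integrand in the submanifold and in the ambient $\Lambda_J(\h)$, so any shorter curve in $\uuc$ would contradict the minimality of $\delta$ in $\Lambda_J(\h)$. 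Uniqueness when $\|\e_{S_0}-\e_{S_1}\|<2$ is inherited from the same theorem, and geodesic convexity is precisely the assertion that this minimal geodesic remains in $\uuc$.

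First I would show $\e_{S_1}-\e_{S_0}\in{\cal C}(\h)$. Writing $S_i=u_i(L_0)$ with $u_i\in U_c(\h_J)$ and putting $u=u_1u_0^*$, the identity
$$
\e_{S_1}-\e_{S_0}=(u-I)\e_{S_0}+\e_{S_0}(u^*-I)+(u-I)\e_{S_0}(u^*-I)
$$
places the difference in ${\cal C}(\h)$. Furutani's theorem then makes $(S_0^\perp,S_1)$ a Fredholm pair, so the middle summand $\h_{01}\oplus\h_{10}$ of the decomposition $\h=\h_{00}\oplus\h_{11}\oplus(\h_{01}\oplus\h_{10})\oplus\h_0$ used in Theorem \ref{hopfrinowlagrange} is finite-dimensional.

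Next I would inspect the block-by-block construction of $z$ from Theorem \ref{hopfrinowlagrange}. On $\h_{00}\oplus\h_{11}$ the two symmetries coincide and $z$ vanishes. On $\h_{01}\oplus\h_{10}$ the recipe prescribes $z_2=\frac{\pi}{2}J|_{\h_{01}\oplus\h_{10}}$, which is finite-rank by the previous step. On the generic part $\h_0$, which may well be infinite-dimensional, Halmos' trick produces commuting positive contractions $c,s$ on a Hilbert space $\k$ with $c^2+s^2=I$ and $\ker c=\ker s=\{0\}$, after which $z_0'$ is unitarily conjugate to the codiagonal operator built from $x=\arcsin s$.

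The subtle step is compactness of $z_0'$: since $\h_0$ need not be finite-dimensional, I would instead exploit compactness of $\e_{S_1}-\e_{S_0}$. Because $\h_0$ reduces both symmetries, the restricted difference is compact on $\h_0$; in Halmos coordinates the $(2,2)$-entry of the reduced $p_{S_1}'-p_{S_0}'$ is exactly $s^2$, so $s^2$, and hence its positive square root $s$, is compact on $\k$. Continuous functional calculus applied to $\arcsin$ (continuous on $[0,1]$ with $\arcsin(0)=0$) then makes $x=\arcsin s$ compact, and with it $z_0'$. Summing the blocks, $z$ is compact with $\|z\|\le\pi/2$, the series $e^{tz}-I=\sum_{n\ge 1}(tz)^n/n!$ shows $e^{tz}\in U_c(\h_J)$, and the conclusion follows as outlined in the first paragraph.
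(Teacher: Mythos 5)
Your argument for existence, minimality and uniqueness is essentially the paper's: reduce to the velocity operator $z$ of Theorem \ref{hopfrinowlagrange} and prove it is compact block by block. Your treatment of the generic part is a clean variant worth noting: you read off $s^2$ as the $(2,2)$-entry of the compact difference $p''_1-p''_0$ and apply $\arcsin$ by continuous functional calculus, whereas the paper uses compactness of $p_{S_1}|_{\ker p_{S_0}}$ (the second defining condition of the restricted Grassmannian) to get $\sin(x)\cos(x)+\sin(x)^2$ compact and then writes $x=f(x)\,g(x)^{-1}$ for an entire $f(t)=tg(t)$ with $g$ non-vanishing on $[0,\pi/2]$. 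Both are correct, and your route is arguably more direct.

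There is, however, a genuine gap in the final assertion. Geodesic convexity as stated is not ``the constructed minimal geodesic remains in $\uuc$''; it says that \emph{every} geodesic of the connection joining $S_0$ and $S_1$ lies in $\uuc$. When $\|\e_{S_0}-\e_{S_1}\|<2$ this does follow from uniqueness, but when $\|\e_{S_0}-\e_{S_1}\|=2$ there may be two or infinitely many geodesics with the given endpoints (compare the example of $G_I$ and $G_e$ in Section 3), each with its own initial velocity, and a priori such a velocity need not be compact. The paper closes this with a separate spectral argument: if $y'$ is the initial speed of another geodesic with the same endpoints and $\|y\|=\|y'\|=\pi/2$, the endpoint condition forces $e^{2iy}=e^{2iy'}$; since $y$ is compact, $-1$ has finite multiplicity in $\sigma(e^{2iy})$, so the $\pm\pi/2$-eigenspaces of $y'$ are finite dimensional, while the remaining eigenvalues $\mu_n$ of $y'$ have finite-rank eigenprojections and tend to $0$; hence $y'$ is compact and its geodesic also stays in $\uuc$. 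You need to supply this (or an equivalent) argument to obtain the convexity statement in full.
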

\begin{proof}
It suffices to prove the result for $S_0=L_0$. We proceed as in Th. \ref{hopfrinowlagrange}, we must show that the complex anti-hermitic operator $z$, which anti-commutes with $\e_{L_0}$,   and verifies that $e^z(L_0)=S_1$, or equivalently $e^z\e_{L_0}e^{-z}=e^{2z} \e_{L_0}$, is compact.  This operator operator $z$ is constructed as the sum of $z_0$ acting in the generic part of the subspaces $L_0$ and $S_1$, plus $z_2$, which is $\frac{\pi}{2} J$ acting in $(L_0\cap S_1^\perp)\oplus (L_0^\perp \cap S_1)$.

First note that $(L_0\cap S_1^\perp)\oplus (L_0^\perp \cap S_1)$ is finite dimensional. This follows from the fact that $S_1$ belongs to the restricted Grassmannian $G_{res}^0(L_0)$: 
$$
p_{L_0}|_{S_1}:S_1\to L_0
$$
is a Fredholm operator, thus $S_0^\perp\cap L_0=\ker (p_{L_0}|_{S_1})$ is finite dimensional. Since $J(S_0^\perp\cap L_0)=S_0\cap L_0^\perp$, our claim is proven.

Consider now the generic part. Again using Halmos' trick, it suffices to consider the case when the projections onto the subspaces $L_0$ and $S_1$ are given (respectively) by the matrices
$$
p_0= \left( \begin{array}{cc} 1 & 0 \\ 0 & 0 \end{array} \right) \ \ \hbox{ and } \ \ 
p_1=\left( \begin{array}{cc} c^2 & cs \\ cs & s^2 \end{array} \right), 
$$
acting in $\k\times\k$, with $c,s$ commuting positive operators in $\k$, such that $c^2+s^2=I$. Therefore there exists $x\ge 0$, $\|x\|\le \pi/2$ in $\k$ such that $c=\cos(x)$ and $s=\sin(x)$. Since $p_1$ lies in the restricted Grassmannian of $p_0$, it follows that $p_1|_{\ker p_0}$ is compact. That is, $\cos(x)\sin(x)+\sin(x)^2$ is compact in $\k$. Note that the entire function $f(t)=\cos(t)\sin(t)-\sin(t)^2$ is of the form $f(t)=tg(t)$ for $g$ an entire function, non vanishing in the interval $[0,\pi/2]$. It follows that $x=f(x)\frac{1}{g}(x)$ is compact. 
This completes the proof of the first statement.

For the last statement, we assume again that $S_0=L_0$, and we regard $S_1\in \uu^{c,J}_{L_0}$ as the projection $p_{S_1}$ given by
$$
p_{S_1}=\left( \begin{array}{cc}  \sin(2y)+I & \cos (2y) \\ \cos(2y) & I-\sin(2y) \end{array} \right),
$$
with  $y$ self-adjoint and compact in $\k$. Assume that $\|y\|=\frac{\pi}{2}$ (the case $\|y\|<\frac{\pi}{2}$ has been proved). Let $y'$ be any other self-adjoint operator such that $\|y'\|=\frac{\pi}{2}$ and
\begin{equation}\label{iguales}
\left( \begin{array}{cc}  \sin(2y')+I & \cos (2y') \\ \cos(2y') & I-\sin(2y') \end{array} \right)=\left( \begin{array}{cc}  \sin(2y)+I & \cos (2y) \\ \cos(2y) & I-\sin(2y) \end{array} \right),
\end{equation}
i.e. $y'$ is the initial speed of a geodesic $\gamma$ starting at $S_0$ and ending at $S_1$.  Then 
$$
y'=\frac{\pi}{2}p_{+\infty}-\frac{\pi}{2}p_{-\infty}+\sum\limits_{|\mu_n|<\frac{\pi}{2}} \mu_n p_n,
$$
where $p_{+\infty,},p_{-\infty},p_n$ are mutually orthogonal projections. Indeed, from equation (\ref{iguales}), we obtain $\sin(2y)=\sin(2y')$ and $\cos(2y)=\cos(2y')$. Equivalently, $e^{2iy}=e^{2iy'}$, and in particular $e^{2i\sigma(y)}=e^{2i\sigma(y')}$. From this and the fact that $\|y\|=\|y'\|=\frac{\pi}{2}$, it follows that the $p_n$ are finite rank projections and moreover $\mu_n\to 0$. Since $y$ is a compact operator, the multiplicity of $-1$ in the spectrum of $e^{2iy'}$ is finite, and then it must be that $p_{+\infty}$, $p_{-\infty}$ are also of finite rank by the stated equality $e^{2iy}=e^{2iy'}$. This proves that $y'$ is also compact, and then $\gamma\subset\uu^{c,J}_{L_0}$.
\end{proof}
Note that if the eigenspace of $e^{2iy}$  corresponding to $-1$ is one dimensional, there are two minimal geodesics joining $G_I$ and $e^{iy}(G_I)$. Otherwise, for dimension greater or equal than $2$, there are infinitely many. Thus in $\uuc$, two points are joined by one, two or infinitely many minimal geodesics.

One may replace the Fredholm group by any of the Schatten unitary groups $U_k(\h_J)$ for $k\ge 2$. Namely let $B_k(\h_J)$ denote $k$-Schatthen ideal
$$
B_k(\h)=\{a\in B(\h): Tr(|a|^k)<\infty\},
$$
with the $k$-norm $\|a\|_k=Tr(|a|^k)^{1/k}$. Denote by
$$
U_k(\h_J)=\{u\in U(\h_J): u-I\in B_k(\h_J)\}.
$$
Consider the orbit of  $L_0\in\Lambda_J(\h)$ under the action of $U_k$,
$$
\uuk=\{u(L_0): u\in U_k(\h_J)\}.
$$
The $k$-Schatten restricted Grassmannian is defined accordingly. A closed subspace $S\subset \h$ belongs to the $k$-Schatten restricted Grassmannian $G^{0,k}_{res}(L_0)$ if
\begin{enumerate}
\item
$p_{L_0}|_{S}:S\to L_0 \hbox{ is invertible modulo } B_k(\h) \hbox{ and has index } 0,$
and 
\item
$p_{L_0}|_{T}:T\to L_0 \hbox{ belongs to } B_k(T,L_0).$
\end{enumerate}
The group $U_k(\h)$ acts transitively on $G^{0,k}_{res}(L_0)$. Therefore $\uuk=\Lambda_J(\h)\cap G^{0,k}_{res}(L_0)$.
Also a straightforward computation shows that, regarding subspaces as symmetries
$$
\uuk\subset \e_{L_0}+B_k(\h)^J.
$$
Therefore one may use the $k$ norm to measure the distance between elements in $\uuk$. We may thus ask the analogous questions for this norm. 
\begin{itemize}
\item First, if $\uuk$ is a submanifold of the $k$-Schatten affine space $\e_{L_0}+B_k(\h)^J$. 
\item Second, if the connection in $\Lambda_J(\h)$ restricts to this manifold as well as in the compact case. 
\item Third, if so, and if we endow the tangent spaces of $\uuk$ with the $k$-norm, examine the minimality properties of the geodesics of the connection.
\end{itemize}

The first question is answered affirmatively. The proof is similar to the one given for Proposition \ref{subvariedadcompactos}, replacing $C(\h)$ with $B_k(\h)$. Let us sketch it here.
\begin{prop}
The set $\uuk$ is a differentiable submanifold of $\e_{L_0}+B_k(\h)^J$.  The map
$$
\pi_{L_0}:U_k(\h_J)\to \uuk, \ \ \pi_{L_0}(u)=u(L_0), \;(\hbox{or equivalently } \  \pi_{L_0}(u)=u\e_{L_0}u^*)
$$ is a $C^\infty$ submersion.
\end{prop}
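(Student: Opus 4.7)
The plan is to follow the proof of Proposition \ref{subvariedadcompactos} step by step, replacing the compact ideal ${\cal C}(\h)$ by the Schatten ideal $B_k(\h)$ throughout. One again applies the Banach--Lie group lemma stated before Proposition \ref{subvariedadcompactos}, so it suffices to verify (i) smoothness of $\pi_{L_0}:U_k(\h_J)\to \e_{L_0}+B_k(\h)^J$ with split differential at the identity, and (ii) existence of a continuous local cross section near $\e_{L_0}$.

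For (i), smoothness is immediate because $\pi_{L_0}(u)-\e_{L_0}=(u-I)\e_{L_0}u^*+\e_{L_0}(u^*-I)$ is polynomial in $u-I$ and lies in $B_k(\h)^J$ by the bimodule property of $B_k$ (products of $J$-anti-commuting elements commute with $J$). The differential at the identity is
\[
\delta_{L_0}:B_k(\h_J)_{ah}\to B_k(\h)^J,\qquad \delta_{L_0}(x)=x\e_{L_0}-\e_{L_0}x.
\]
The argument of Proposition \ref{subvariedadcompactos} carries over verbatim: the kernel consists of the anti-hermitian complex $k$-Schatten operators commuting with $\e_{L_0}$ (block diagonal in the $\e_{L_0}$-decomposition), complemented by those anti-commuting with $\e_{L_0}$ (block off-diagonal) via the continuous idempotent $x\mapsto \tfrac{1}{2}(x-\e_{L_0}x\e_{L_0})$; the range equals the space of self-adjoint elements of $B_k(\h)^J$ anti-commuting with $\e_{L_0}$, with explicit preimage $\tfrac{1}{2}y\e_{L_0}$, and is complemented in $B_k(\h)^J$ by splitting off the anti-hermitian part and the part commuting with $\e_{L_0}$, again through continuous idempotents built from $*$ and left/right multiplication by $\e_{L_0}$.

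For (ii), I adapt the Porta--Recht cross section: for $L$ near $L_0$ set $g_L=\tfrac{1}{2}(I+\e_L\e_{L_0})$. Since $\e_L-\e_{L_0}\in B_k(\h)^J$, the difference $g_L-I=\tfrac{1}{2}(\e_L-\e_{L_0})\e_{L_0}$ lies in $B_k(\h)$ and commutes with $J$, so $g_L\in I+B_k(\h_J)$; for $\|\e_L-\e_{L_0}\|<2$ it is invertible, and the unitary part $u_L:=g_L(g_L^*g_L)^{-1/2}$ of its polar decomposition satisfies $u_L\e_{L_0}u_L^*=\e_L$ and depends continuously on $L$.

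The main obstacle is verifying that $u_L\in U_k(\h_J)$, equivalently $u_L-I\in B_k(\h_J)$. Setting $a:=g_L^*g_L-I\in B_k(\h_J)$, the relevant identity is
\[
u_L-I=(g_L-I)(g_L^*g_L)^{-1/2}+\bigl((g_L^*g_L)^{-1/2}-I\bigr),
\]
so I need $(g_L^*g_L)^{-1/2}-I\in B_k(\h_J)$. The function $f(t)=(1+t)^{-1/2}-1$ is analytic near $t=0$ with $f(0)=0$, and its Taylor series $\sum_{n\ge 1}c_n a^n$ converges absolutely in the $k$-norm for $\|a\|_\infty$ small: the bimodule inequality $\|a^n\|_k\le\|a\|_\infty^{n-1}\|a\|_k$ gives the summable bound $\sum|c_n|\,\|a\|_\infty^{n-1}\|a\|_k$ inside the radius of convergence of $f$. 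Hence $(g_L^*g_L)^{-1/2}-I\in B_k(\h_J)$, so $u_L-I\in B_k(\h_J)$, which yields the required local section and completes the application of the lemma.
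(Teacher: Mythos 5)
Your proposal is correct and follows essentially the same route as the paper: the same Banach--Lie group lemma, the same analysis of $\delta_{L_0}$ with complemented kernel and range, and the same Porta--Recht cross section $u_L=g_L(g_L^*g_L)^{-1/2}$. The only (harmless) variation is in checking $u_L-I\in B_k(\h_J)$: you do this directly via the power series of $(1+t)^{-1/2}-1$ and the estimate $\|a^n\|_k\le\|a\|_\infty^{n-1}\|a\|_k$, whereas the paper observes that the whole computation takes place in the unitization $\mathbb{C}I+B_k(\h_J)$ and invokes the compact case to identify the scalar part as $1$; your version makes explicit what the paper leaves implicit.
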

\begin{proof}
The first part of the argument follows verbatim as in Prop. \ref{subvariedadcompactos}. The map
$$
\delta_{L_0}=d(\pi_{L_0})_I:B_k(\h_J)_{ah}\to B_k(\h)^J \ , \ \ \delta_{L_0}(x)=x\e_{L_0}-\e_{L_0}x.
$$
is proven to have complemented kernel and range in the same fashion as above.

In order to prove that $\pi_{L_0}:U_c(\h_J)\to \uu^{c,J}_{L_0}$ is open, we show that it has local continuous cross sections. Let us prove that the cross section defined in Prop. \ref{subvariedadcompactos} on a neighborhood of $\e_{L_0}$ adapts to this situation.
For a Lagrangian subspace $L$,  the element $g=\frac12(I+\e_L\e_{L_0})$ is invertible if $\e_L$ is close to $\e_{L_0}$. If $L\in\uuk\subset \e_{L_0}+B(\h)^J$, then
$$
\e_L\e_{L_0}\in(\e_{L_0}+{\cal C}(\h)^J)\e_{L_0}=I+B_k(\h)^J\e_{L_0}=I+B_k(\h_J).
$$
The unitary part $u_L$ in the polar decomposition of $g$, $u_L=g(g^*g)^{-1/2}$, is a continuous map (in the parameter $L$) of complex unitary operators, in the topology given by the $k$ norm. Note that $B_k(\h_J)$ is a *-Banach algebra, and that the computation done to obtain $u_L$ is performed in the unitization of this algebra. The operations involved (product, involution, inversion, square root) are continuous in the unitization.

It conjugates $\e_{L_0}$ and $\e_L$. Let us prove that it takes values in $U_k(\h_J)$.  By Prop. \ref{subvariedadcompactos}, it takes values in $U_c(\h_J)$. On the other hand, as remarked above, it also takes values in $\mathbb{C}I+B_k(\h_J)$, the unitization of $B_k(\h_J)$.
\end{proof}

The second question, that the connection of $\Lambda_J(\h)$ restrict well to $\uuk$, also follows almost verbatim form the analogous fact for the compact case (Prop. \ref{conexioncompactos}). If $X$ and $Y$ are  two tangent vector fields in $\uuk$, and  $\nabla$ the linear connection in $\Lambda_J(\h)$, then 
$$
\nabla_XY\in T \uuk.
$$
Indeed, if $X(t)$ is a tangent field along $\e(t)$ in $\uuk$, its covariant derivative $\frac{D}{dt}X=\Pi_\e(\dot{X}),
$ takes values in $T\uuk$, because $X(t)\in B_k(\h)$  for all $t$ and therefore $\dot{X}(t)\in B_k(\h)$. 

\medskip

Let us finish this paper with a proof of the minimality results on geodesics of $\uuk$ in the $k$-norm. 

\begin{thm}
Let $S_0,S_1\in  \uuk$. Then there exists a geodesic of the linear connection of $\uuk$ which joins them. The geodesic has minimal length (with respect to the Finsler metric induced by the $k$-norm). If $\|\e_{S_0}-\e_{S_1}\|<2$, the geodesic is unique. The manifold $\uuk$ is geodesically convex: if $S_0,S_1\in \uuk$ and $\gamma$ is a geodesic of the connection joining them, then $\gamma\subset \uuk$.
\end{thm}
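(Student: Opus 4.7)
The plan is to imitate the proof of Theorem \ref{minimalidadcompacta}, replacing the ideal ${\cal C}(\h)$ by $B_k(\h)$ throughout, with a separate minimality argument required because the Finsler metric on $\uuk$ is the intrinsic $k$-norm rather than the operator norm. By transitivity of $U_k(\h_J)$, reduce to the case $S_0=L_0$. Theorem \ref{hopfrinowlagrange} produces a complex anti-hermitic $z\in\b(\h)$ anti-commuting with $\e_{L_0}$, with $\|z\|\le\pi/2$ and $e^{2z}\e_{L_0}=\e_{S_1}$. The decisive step is to refine this construction so that $z\in B_k(\h_J)$.

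Decompose $z=z_0+z_2$ as in Theorem \ref{minimalidadcompacta}. Since $B_k(\h)\subset{\cal C}(\h)$, the $k$-Schatten restricted Grassmannian hypothesis makes $p_{L_0}|_{S_1}$ a Fredholm operator, so $L_0^\perp\cap S_1$ (and by the $J$-symmetry $L_0\cap S_1^\perp$) is finite-dimensional and the summand $z_2=\frac{\pi}{2}J$ on this sum has finite rank. For the generic part, Halmos' trick reduces the projections to
$$
p_0=\left(\begin{array}{cc} 1 & 0 \\ 0 & 0 \end{array}\right),\quad p_1=\left(\begin{array}{cc}\cos^2 x & \cos x\sin x \\ \cos x\sin x & \sin^2 x\end{array}\right),\quad 0\le x\le\pi/2.
$$
Since $\uuk\subset\e_{L_0}+B_k(\h)^J$, each entry of $p_1-p_0$ lies in $B_k(\k)$, so in particular
$$
\sin^2 x+\cos x\sin x=\sin x(\sin x+\cos x)=x\cdot h(x)
$$
belongs to $B_k(\k)$, where $h(t):=\sin t(\sin t+\cos t)/t$ extends to an entire function with $h(0)=1$ and $h(t)>0$ on $[0,\pi/2]$. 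The operator $h(x)$ is therefore bounded and boundedly invertible, yielding $x\in B_k(\k)$, hence $z_0\in B_k(\h_J)$. Thus $z\in B_k(\h_J)$ and $\delta(t)=e^{2tz}\e_{L_0}$ stays in $\uuk$ throughout, giving existence. Uniqueness when $\|\e_{S_0}-\e_{S_1}\|<2$ is inherited from Theorem \ref{hopfrinowlagrange}. Geodesic convexity then follows by transferring the spectral argument at the end of Theorem \ref{minimalidadcompacta} to the Schatten setting: for any other geodesic with initial velocity $y'$ satisfying $e^{2iy'}=e^{2iy}$, the finite multiplicities of the $\pm\pi/2$ eigenvalues of $y$ (already assured since $y\in B_k\subset{\cal C}(\h)$) force the corresponding spectral projections of $y'$ to be finite rank, and on the complement the nonzero spectra of $y$ and $y'$ coincide with multiplicities, so $\sum|\mu_n|^k<\infty$ for $y'$ as well.

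For minimality in the $k$-norm, observe that $\dot\delta(t)=2z\,\delta(t)$ has constant $k$-norm equal to $2\|z\|_k$ by unitary invariance, so $L_k(\delta)=2\|z\|_k$. To verify optimality, invoke the standard minimality theorem for one-parameter subgroups of Schatten unitary groups: for $w\in B_k(\h_J)_{ah}$ with $\|w\|\le\pi$, the curve $t\mapsto e^{tw}$ minimizes the $k$-length in $U_k(\h_J)$ between $I$ and $e^w$ for the left-invariant Finsler metric. The submersion $\pi_{L_0}:U_k(\h_J)\to\uuk$ constructed in the preceding proposition has horizontal distribution equal to the complex co-diagonal operators with respect to $\e_{L_0}$ (to which $2z$ belongs), so it is a metric submersion for the $k$-norm Finsler metrics, and minimality descends to $\uuk$. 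The principal obstacle is exactly this step: in the compact case of Theorem \ref{minimalidadcompacta}, minimality in $\uuc$ was inherited for free from minimality in $\Lambda_J(\h)$ because both carried the operator-norm Finsler, but the intrinsic $k$-norm on $\uuk$ is strictly finer than any norm coming from the ambient Lagrangian Grassmannian, so a genuine Schatten-class minimality result for one-parameter subgroups is indispensable.
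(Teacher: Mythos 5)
Your existence and convexity arguments coincide with the paper's: reduce to the generic part after noting that $(S_0\cap S_1^\perp)\oplus(S_0^\perp\cap S_1)$ is finite dimensional, write $\sin x(\sin x+\cos x)=x\,h(x)$ with $h$ entire, $h(0)=1$ and $h>0$ on $[0,\pi/2]$ to conclude $x\in B_k(\k)$ from the fact that the corner entries of $p_1-p_0$ lie in $B_k(\k)$, and transfer the spectral argument for a second initial velocity $y'$ with $e^{2iy'}=e^{2iy}$. All of that matches the paper (your sign in $\sin x(\sin x+\cos x)$ is in fact the correct one; the paper has a typo there), and you are also right that the $k$-norm minimality is the genuinely new point that cannot be inherited from the operator-norm statement.

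The gap is in how you obtain that minimality. You quote only the two-point result in the group (that $t\mapsto e^{tw}$, $w^*=-w\in B_k(\h_J)$, $\|w\|\le\pi$, is $k$-minimal in $U_k(\h_J)$ between $I$ and $e^{w}$) and then assert that minimality ``descends'' through the metric submersion $\pi_{L_0}$. That implication does not hold as stated: a competing curve $\gamma$ in $\uuk$ from $\e_{S_0}$ to $\e_{S_1}$ lifts horizontally to a curve in $U_k(\h_J)$ from $I$ to \emph{some} point $e^{2z}v$ of the fiber over $\e_{S_1}$, with $v$ in the isotropy group, and two-point minimality of $e^{2tz}$ gives no comparison against such a lift; you would need $e^{2tz}$ to realize the distance from $I$ to the whole fiber. (The Hopf fibration $S^3\to S^2$ shows that two-point minimality upstairs does not in general descend through a metric submersion.) The paper sidesteps this by going in the opposite direction: right multiplication by the fixed symmetry $\e_{S_0}$ is an \emph{isometric embedding} of $\uuk$ into $U_k(\h_J)$ (products of two $J$-anticommuting symmetries are complex unitaries in $I+B_k(\h_J)$), sending the geodesic to $e^{2tz}$ and any competitor $\gamma$ to $\gamma\e_{S_0}$, a curve with exactly the same endpoints $I$ and $e^{2z}$; the two-point result then applies verbatim and yields $L_k(\e)=L_k(\e\,\e_{S_0})\le L_k(\gamma\,\e_{S_0})=L_k(\gamma)$. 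To repair your version you would either have to upgrade the quoted theorem to minimality from $I$ to the coset of $e^{2z}$ modulo the isotropy group, or simply replace the submersion step by this embedding.
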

\begin{proof}
As in the proof of Th. \ref{minimalidadcompacta}, $(S_0\cap {\cal S}_1^\perp)\oplus(S_0^\perp\cap S_1)$ is finite dimensional. Thus it suffices to regard the generic part of $S_0$ and $S_1$. Note that the argument in Th. \ref{minimalidadcompacta} used to prove that $x$ (such that $\cos(x)=c$ and $\sin(x)=s$) is compact, shows in fact that in this case $x\in B_k(\k)$.
The argument proving the geodesic convexity can also be adapted to this case, because $y$ and $y'$ share the same eigenvalues $\mu_n$ (with $|\mu_n|<\pi/2$).

It remains to be proved that the geodesics are minimal when measured with the $k$-norm. To prove this we use the fact that in the group $U_k(\h_J)$, the curves of the form $\delta(t)=u e^{tx}$ with $x^*=-x\in B_k(\h_J)$ are minimal for $t\in[0,1]$ provided that the operator norm $\|x\|\le \pi$ \cite{upe}. Let $S_0,S_1\in\uuk$ and let $\e(t)=e^{2tz}\e_{S_0}$ be the geodesic joining them ($t\in[0,1]$). Note that $z^*=-z\in B_k(\h_J)$ and $\|z\|\le \pi/2$. Let $\gamma$ be another curve in $\uuk$ with the same endpoints. Since both $\e$ and $\gamma$ lie in particular in $\e_{S_0}+B_k(\h)$, it follows that $\gamma \e_{S_0}$ and $\e \e_{S_0}=e^{2tz}$ are curves in $I+B_k(\h_J)$, i.e. they are curves in $U_k(\h_J)$, joining the same endpoints, since $\|2z\|\le \pi$. By the fact that multiplying by a fixed element $\e_{S_0}$ is an isometric map (between $\uuk$ and $U_k(\h_J)$), and the minimality result in $U_k(\h_J)$, it follows that if $L_k$ denotes the length of a curve in the $k$-norm (either in $U_k(\h_J)$ or $\uuk$), then
$$
L_k(\e)=L_k(\e \e_{S_0})\le L_k(\gamma \e_{S_0})=L_k(\gamma).
$$
\end{proof}

\bigskip

\noindent
Esteban Andruchow and Gabriel Larotonda\\
Instituto de Ciencias \\
Universidad Nacional de Gral. Sarmiento \\
J. M. Gutierrez 1150 \\
(1613) Los Polvorines \\
Argentina  \\
e-mails: eandruch@ungs.edu.ar, glaroton@ungs.edu.ar

\end{document}